\numberwithin{equation}{section}
\theoremstyle{plain}
\theoremstyle{remark}
\newtheorem{theorem}{Theorem}[section]
\newtheorem{corollary}{Corollary}[section]
\newtheorem{remark}{Remark}[section]
\newtheorem{lemma}{Lemma}[section]
\newcommand{\N}{ \mathbb{N} }
\newcommand{\Z}{ \mathbb{Z} }
\newcommand{\R}{ \mathbb{R} }
\newcommand{\trunc}[1]{ {\lfloor #1 \rfloor} }
\newcommand{\wh}[1]{ \widehat{ #1 } }
\newcommand{\wt}[1]{ \widetilde{ #1 } }
\newcommand{\calD}{\mathcal{D}}
\newcommand{\calF}{\mathcal{F}}
\newcommand{\calG}{\mathcal{G}}
\newcommand{\calI}{\mathcal{I}}
\newcommand{\calJ}{\mathcal{J}}
\newcommand{\calM}{\mathcal{M}}
\newcommand{\calN}{\mathcal{N}}
\newcommand{\calU}{\mathcal{U}}
\newcommand{\calX}{\mathcal{X}}
\newcommand{\calY}{\mathcal{Y}}
\newcommand{\eins}{\mathbf{1}}
\newcommand{\matA}{{ \mathbf A}}
\newcommand{\matC}{{\mathbf C}}
\newcommand{\matP}{{ \mathbf P}}
\newcommand{\matV}{{\mathbf V}}
\newcommand{\matW}{{\mathbf W}}
\newcommand{\vecnull}{{ 0}}
\newcommand{\vecB}{{ \mathbf B}}
\newcommand{\vecc}{{ \mathbf c}}
\newcommand{\vecD}{{ \mathbf D}}
\newcommand{\vecu}{{ \mathbf u}}
\newcommand{\vecv}{{ \mathbf v}}
\newcommand{\vecr}{{ \mathbf r}}
\newcommand{\vecR}{{ \mathbf R}}
\newcommand{\vecS}{{ \mathbf S}}
\newcommand{\vecw}{{ \mathbf w}}
\newcommand{\vecX}{{ \mathbf X}}
\newcommand{\vecY}{{ \mathbf Y}}
\newcommand{\vecZ}{{ \mathbf Z}}
\newcommand{\vecV}{{ \mathbf V}}
\newcommand{\bfmu}{\mu}
\newcommand{\bfbeta}{\beta}
\newcommand{\bfxi}{\boldsymbol{\xi}}
\newcommand{\bfSigma}{\boldsymbol{\Sigma}}
\newcommand{\Var}{{\mbox{Var\,}}}
\newcommand{\Cov}{{\mbox{Cov\,}}}
\newcommand{\matid}{I}
\newcommand{\trace}{ \operatorname{tr} }
\newcommand{\id}{\operatorname{id}}
\newcommand{\cadlag}{c\`adl\`ag }
\begin{document}

\title[Inference for Large Covariance Matrices]{Large-sample approximations for variance-covariance matrices of high-dimensional time series}

\author{Ansgar Steland}
\address{Institute of Statistics\\ RWTH Aachen University\\W\"ullnerstr. 3, D-52056 Aachen, Germany}
\email{steland@stochastik.rwth-aachen.de}

\author{Rainer von Sachs}
\address{Institut de Statistique, Biostatistique et Sciences Actuarielles (ISBA)\\ Universit\'e catholique de Louvain\\ Voie du Roman Pays 20, B-1348 Louvain-la-Neuve, Belgium}
\email{rainer.vonsachs@uclouvain.be}

\date{December 2015. This is a preprint version of an article to appear in {\em Bernoulli}}

\maketitle

\begin{abstract}
Distributional approximations of (bi--) linear functions of sample variance-covariance matrices play a critical role to analyze vector time series, as they are needed for various purposes, especially to draw inference on the dependence structure in terms of second moments and to analyze projections onto lower dimensional spaces as those generated by principal components. This particularly applies to the high-dimensional case, where the dimension $d$ is allowed to grow with the sample size $n$ and may even be larger than $n$. We establish large-sample approximations for such bilinear forms related to the sample variance-covariance matrix of a high-dimensional vector time series in terms of strong approximations by Brownian motions. The results cover weakly dependent as well as many long-range dependent linear processes and are valid for  uniformly $ \ell_1 $-bounded projection vectors, which arise, either naturally or by construction, in many statistical problems extensively studied for high-dimensional series.
Among those problems are sparse financial portfolio selection, sparse principal components, the LASSO, shrinkage estimation and change-point analysis for high--dimensional time series, which matter for the analysis of big data and are discussed in greater detail. \\
Keywords:
{Brownian motion} {Central limit theorem} {Change-points} {Long memory}
{Multivariate analysis} {Principal component analysis} {Portfolio analysis} {Strong approximation}
{Time Series}
\end{abstract}

\section{Introduction}

The estimation of high-dimensional variance-covariance matrices based on a vector time series arises in diverse areas such as financial portfolio optimization, image analysis and multivariate time series analysis in general. Of particular interest is the case that the dimension $ d = d_n $ of the time series grows even faster than the sample size $n$. Due to the lack of consistency of the sample variance-covariance estimator with respect to commonly used norms such as the Frobenius norm, various regularized modifications have been proposed and extensively studied within a high-dimensional context. For example, banding and tapering estimators, recently studied by \cite{BickelLevina2008} for Gaussian samples, may achieve consistency if $ \log d_n/n = o(1) $. \cite{ChenXuWu2014} establish bounds for thresholded sample covariance estimators for a high-dimensional vector time series in terms of scaled Frobenius and spectral norms, allowing for non-stationarity and dependence. The performance of shrinkage estimation, a widely used technique dating back to the seminal work of \cite{Stein1956}, has been investigated by \cite{LedoitWolf2003} for i.i.d. samples of growing dimension and further studied for the weak dependent case in \cite{Sancetta2008}. For results on shrinkage estimation in the frequency domain we refer to \cite{BoehmSachs2009}.

However, often the estimation of the $d_n^2$-dimensional variance-covariance matrix is an intermediate step and one is mainly interested in the behavior of functions of the sample variance-covariance matrix, especially quadratic and bilinear forms which naturally arise when studying projection type statistics. In addition, one often needs distributional approximations of such functions, in order to construct statistical decision procedures. Whereas consistency and performance properties have been already investigated to some extent (see above), there are only a few results about the asymptotic distribution theory in the sense of distributional convergence (weak convergence) and strong approximations by Brownian motions, respectively, going beyond the classical
 results. Our framework allows to embed autocovariances and, in particular, cross-autocovariances as well.
To the best of our knowledge, there are no weak convergence results addressing those issues within a high-dimensional framework, i.e. for a large number of correlated time series. In some sense close to the present paper are the following results for fixed dimension and autocovariance matrices.. \cite{WuMin2005} derived a CLT for a finite number of sample auto-covariances assuming a linear process. For a one-dimensional context, \cite{XiaoWu2014} study a central limit theorem (CLT), Portmanteau tests and simultaneous inference for a growing number of lags based on a Gumbel type extreme value theory, see also the review \cite{Wu2011} and \cite{Jirak2011}. In \cite{WuHuangZheng2010} the estimation of autocovariances for long memory linear processes has been discussed and studied in depth including the case of a finite number of lags starting at a large lag $k_n $ with $ k_n/n = o(1) $.  \cite{Kouritzin1995}, also working within a linear process framework, established large-sample distributional asymptotics, based on strong approximations, of the sample cross-covariance matrix for two time series. His assumptions are weak enough to cover the case of long-range dependence as well.

The present paper builds upon the latter result by establishing strong approximations of bilinear forms associated to the centered sample covariance matrix of a high-dimensional vector time series. The result implies the validity of a central limit theorem (CLT) for scaled bilinear forms
 $ \sqrt{n} \vecv_n'[ \wh{\bfSigma}_n - E( \wh{\bfSigma} ) ] \vecw_n $, where $ \wh{\bfSigma}_n $ is the usual sample variance-covariance matrix and $ \vecv_n $, $ \vecw_n $ are weighting vectors. It turns out that  $ d_n $ may even grow faster than $n$.

Concerning the weighting (or projection) vectors, our results assume that they are uniformly bounded in the $ \ell_1 $-norm. Such projections naturally arise in many problems studied in the area of high-dimensional statistics and probability: Sparse optimal portfolio selection, as recently studied by \cite{BrodieEtAl2009}, deals with explicit construction of $ \ell_1 $-bounded portfolios from historical data sets. The same applies to several approaches of sparse principal component analysis, especially those of \cite{JollifeEtAl2003}, \cite{ShenHuan2008} and \cite{WittenTibshirani2009}, where $ \ell_1 $-bounded principal components are constructed, in order to represent high-dimensional data by only a few sparse projections. We discuss those applications in greater detail in Section~\ref{Applications}. We also illustrate how the results can be applied to obtain distributional approximations of shrinkage estimators of a high-dimensional covariance matrix. Lastly, we discuss the application to detect the presence of a change-point. Such procedures analyze the data to identify changes in the distribution and have been thoroughly studied for various second order problems for time series. Of course, a change in a covariance $ \gamma_{X}(i,j) = E(X^{(i)} X^{(j)}) $, $ t \ge 1 $, of a vector time series can be analyzed by applying any method which is sensitive to location changes to the sequence $ X_t^{(i)} X_t^{(j)} $, $ t \ge 1 $. Such methods are discussed in  \cite{HuskovaHlavka2012}, \cite{HorvathAue2013}. Detectors based on local linear estimators have been proposed by \cite{Steland2010} and kernel detectors were studied by \cite{Steland2004} and \cite{Steland2005}. For methods based on characteristic functions we refer to \cite{StelandRafalowicz2014} and the references therein. 

The organization of the paper is as follows.  Section~\ref{Projections} explains the general setting, discusses its basic relationship to projection-based analyses and introduces the bilinear form of interest. Notation, the specific model for the vector time series and its interpretation in terms of an infinite--dimensional  latent factor model as well as assumptions are introduced and discussed in Section~\ref{ModelAssumptions}.
Section~\ref{Asymptotics} provides several results on strong approximations, which imply CLTs and functional central limit theorems (FCLTs) in the sense of Donsker's theorem. We also propose  estimators for the asymptotic variance parameters and show their consistency, uniformly in the dimension. Lastly, Section~\ref{Applications} elaborates on several statistical problems to which our results are directly applicable. Proofs of the main results are provided in an appendix.

\section{Projection-based analysis of high-dimensional time series}
\label{Projections}

Let us assume that we observe $ d $ possibly dependent time series such that at time $n$ we are given the observations
\[
  Y_1^{(\nu)}, \dots, Y_n^{(\nu)}, \qquad \nu = 1, \dots, d_n,
\]
where the dimension $ d = d_n $ may grow with the sample size $n$, such that, as time proceeds, there may be more and more time series available. Equivalently, we are
given a time series of length $n$ of possibly dependent random vectors
\[
 \vecY_{ni} = (Y_i^{(1)}, \dots, Y_i^{(d_n)} )',\qquad 1 \le i \le n,
\]
of dimension $ d_n $, constituting the $ (n \times d_n) $--dimensional data matrix
$
  \calY_n = \left( Y_i^{(j)} \right)_{1 \le i \le n, 1 \le j \le d_n}.
$
We are interested in the second moment structure and thus assume $ E(Y_i^{(j)} ) = 0 $ for all $ j = 1, \dots, d_n $, $ i = 1, \dots, n $ and $ n \ge 1$.
Our assumptions on the coordinate processes, basically that they are linear processes with sufficiently fast decreasing coefficients, are weak enough to cover the common framework of correlated ARMA($p,q$)-processes and also allow for a wide class of long-range dependent series. 

Let us assume for a moment that $ \vecY_{n1}, \dots, \vecY_{nn} $ is stationary, a condition that we shall relax later, and let 
\[
  \vecY_n = ( Y^{(1)}, \dots, Y^{(d_n)} )' 
\] 
be a generic copy. For the analysis of such high-dimensional time series, the unknown variance-covariance matrix
\[
  \bfSigma_n = E( \vecY_n \vecY_n' )
   = \left( E( Y^{(\nu)} Y^{(\mu)} ) \right)_{1 \le \nu, \mu \le d_n}
\]
is of substantial interest, but difficult to estimate from past data, in particular if $ d_n >> n $. It comprises the second-order information on the dependence structure of the $d_n$ variables. Any conclusions on the correlation structure have to rely on estimators calculated from the time series, and inferential procedures require appropriate large-sample asymptotics. Let
\begin{equation}
\label{DefSigmaHat}
  \wh{\bfSigma}_n = \frac1n \sum_{i=1}^n \vecY_{ni} \vecY_{ni}'
\end{equation}
be the $ (d_n \times d_n) $-dimensional sample variance--covariance matrix
with elements
\[
 \wh{\sigma}_{\nu\mu} = \frac{1}{n} \sum_{i=1}^n Y_i^{(\nu)} Y_i^{(\mu)},
 \qquad \nu, \mu = 1, \dots, d_n.
\]
Before proceeding, let us observe  that the above framework also covers the case of a univariate time series $ \{ Z_k : k \ge 0 \} $ as an interesting special case. The {\em embedding} is given by
\[
  \vecY_{ni} = (Z_i, Z_{i+1}, \dots, Z_{i+d_n-1})', \qquad i = 1, \dots, n.
\]
Then
\[
  \wh{\sigma}_{\nu\mu} = \frac1n \sum_{i=1}^n Z_{i+\nu-1} Z_{i+\mu-1},
  \qquad 1 \le \nu, \mu \le d_n.
\]
It follows that the $(h+1)$th element of the first row of $ \wh{\bfSigma}_n $ estimates
$ \gamma_Z(h) = E(Z_0Z_h) $ using the observations $ Z_1, \dots, Z_T$, where $ T = n+h $, and can be written as 
\[
  \wt{\gamma}_Z(h) = \frac{T}{T-h} \wh{\gamma}_Z(h), \qquad \wh{\gamma}_Z(h) = \frac{1}{T} \sum_{i=1}^{T-h} Z_i Z_{i+h},
\]
for $ h = 0, \dots, T-1 $.
In a similar way, one may consider autocovariances and cross-covariances of, say, $ r_n $ time series $ \{ Z_k^{(l)} : k \ge 0 \} $, $ l = 1, \dots, r_n $.

Estimators of $ \bfSigma_n $ are also needed and have to be evaluated in terms of their asymptotic laws when interest focuses on the analysis of (a set of) linear combinations of $ \vecY_n $. Here $ \vecY_n $ may be a generic copy when the vector time series is strictly stationary or equal (in distribution) to $ \vecY_{nn} $ in the general case. Typical examples are convex combinations, contrasts and, more generally, projections. Thus let
\[
  \vecw_n = (w_{1}, \dots, w_{d_n} )', \qquad n \ge 1,
\]
be a sequence of weights $ w_j = w_{nj} $, not necessarily non-negative, with uniformly bounded $ \ell_1 $-norm, i.e.
\begin{equation}
\label{l1Condition}
  \sup_{n \in \N} \| \vecw_n \|_{\ell_1} = \sup_{n \in \N} \sum_{\nu=1}^{d_n} | w_\nu | < \infty
\end{equation}
The class of weighting vectors of the form $ \vecw_n = ( w_1, w_2, \dots, w_{d_n} )' $ for some sequence $ \{ w_i : i \in \N \} $ with $ \sum_{i=1}^\infty | w_i | < \infty $ certainly satisfies the assumptions and covers, for example,
the case of averaging a finite number of coordinates. However, if $ w_i > 0 $ holds for infinitely many $ i \ge 1 $,
then $ w_i = o(1) $, as $ i \to \infty $. Hence infinitely many coordinates are not really taken into 
account. Allowing for weights that depend on the dimension $d_n$ substantially widens the scope.
Now, for example, one may average all coordinates by using the weights $ w_{ni} = 1/d_n $, for $ i = 1, \dots, d_n$.
Although the dependence on the sample size $n$ through the dimension $ d_n $ may be of primary importance for high-dimensional problems, several of our results even allow the weights to depend on $n$.

The variance of the projection $ \vecw_n' \vecY_n $ is given by $ \vecw_n' \bfSigma_n \vecw_n $ and can be estimated nonparametrically by the quadratic form
$
  Q_n( \vecw_n ) = \vecw_n' \widehat{\bfSigma}_n \vecw_n,
$
whose random fluctuations may severely affect any inferential procedure related to $ \vecw_n'\vecY_n$. More generally, we shall study the bilinear form
\begin{equation}
\label{DefBiLinearForm}
  Q_n( \vecv_n, \vecw_n ) = \vecv_n' \wh{\bfSigma}_n \vecw_n
\end{equation}
for weighting vectors $ \vecv_n $ and $ \vecw_n $ with uniformly bounded $\ell_1 $-norms in the sense of (\ref{l1Condition}), which corresponds to the estimator of the covariance of two projections $ \vecv_n' \vecY_n $ and $ \vecw_n' \vecY_n $. Observe that (\ref{DefBiLinearForm}) also allows us to handle the case of
weighted sums of subsets $ \wh{\bfSigma}_{\calI, \calJ} = \{ \wh{\sigma}_{ij} : i \in \calI, j \in \calJ \} $, where $ \calI, \calJ \subset \{1, \dots, d_n \} $. 

It is worth mentioning that $ Q_n $ remains bounded even for degenerate covariance matrices, if $ \vecv_n $ and $ \vecw_n $ have uniformly bounded $ \ell_1 $-norm, as can be seen if we put $ \bfSigma_n = \sigma \eins \eins' $, where here and throughout the article $ \eins = (1, \dots, 1)' \in \R^{d_n} $, leading to
\[
  | Q_n( \vecv_n, \vecw_n ) | = \sigma | \vecv_n' \eins \eins' \vecw_n | = \sigma
  | \sum_i v_{ni} \sum_i w_{ni} | \le \sigma \| \vecv_n \|_{\ell_1} \| \vecw_n \|_{\ell_1}.
\]
Thus, the $ \ell_1 $-norm condition is a natural one: It ensures that $ Q_n $ maps products $ \calU_\delta \times \calU_\delta $ of $ \delta $-balls 
\[
  \calU_\delta = \{ \{ \vecv_n \} : \sup_{n \in \N} \| \vecv_n \|_{\ell_1} \le \delta \},
\]
for $ \delta > 0 $, onto bounded sets, for all covariance matrices with uniformly bounded entries, thus
including cases that correspond to perfectly correlated coordinates.

The behavior of projections for high-dimensional observations has also been studied by
\cite{DiaconisFreedman1984}, but from a different perspective. There it is shown that for large dimension $d$ projections
$ \vecw'\vecX $ are asymptotically normal under weak assumptions by
showing that, given a (non-random) sample $ X_1, \dots, X_n $ and a unit vector
$ \vecw $ uniformly distributed on the $d$-dimensional unit sphere, the empirical
measure of the sample $ \vecw'\vecX_1, \dots, \vecw'\vecX_n $ converges weakly
to a normal law, in probability. Further, 
the joint distribution of two linear combinations, say, $ \vecw_d'\vecZ $ and $ \vecv_d'\vecZ $, of a $d$-dimensional  random vector $ \vecZ $
possessing a Lebesgue density and being standardized, i.e. $ E(\vecZ ) = 0 $ and $ E( \vecZ\vecZ' ) = \id_d $, is bivariate normal in that sense  with unit variances and covariance $ \vecw'\vecv $. However, in that work the projection vectors are 
{\em random} and the data are assumed {\em fixed} (e.g. by conditioning) and constrained to satisfy conditions that are satisfied by, e.g., i.i.d. random vectors of dimension $d_n $ with i.i.d. entries. Contrary, in this paper the projection vectors are fixed and the observations random. Assuming a linear process framework, we provide Gaussian approximations for the sample estimates of the variances of the projections.

\section{A framework for high-dimensional time series}
\label{ModelAssumptions}

Our results dealing with strong approximations of the bilinear forms introduced above rely on some linear process framework. This section provides a careful introduction, discusses some interesting properties and interpretations as well as introduces required notation and some preparatory approximations used later.

\subsection{Model and assumptions}

Let $ \{ \epsilon_k : k \in \Z \} $ be a sequence of independent random variables with mean zero, variances 
\[
  \sigma_k^2 = E( \epsilon_k^2 )
\]
and uniformly bounded moments of the order $ (4+\delta)$,
\begin{equation}
\label{BoundedFourthMoments}
  \sup_k E | \epsilon_k |^{4+\delta} < \infty,
\end{equation}
for some $ \delta > 0 $, such that $ \gamma_k = E \epsilon_k^4 $ and $ \sigma_k^2 $ are finite, for all $k \in \Z $.

We assume that the $\nu$th coordinate of $ \vecY_n $ is given by
\begin{equation}
\label{SubordModel}
  Y_k^{(\nu)} = Y_{nk}^{(\nu)} = \sum_{j=0}^\infty c_{nj}^{(\nu)} \epsilon_{k-j}, \qquad k = 1, \dots, n,
\end{equation}
for coefficients $ \{ c_{nj}^{(\nu)} : j  \in \N_0 \} $, $ \nu = 1, \dots, d_n $. We mainly have in mind the case that we observe, at time $n$, the first $n$ observations of $ d_n $ {\em sequences} $ \{ Y_k^{(\nu)} :  k \ge 0 \} $, $ \nu = 1, \dots, d_n $, but our results also allow for {\em arrays} $ \{ Y_{nk}^{(\nu)} : k \ge 0, n \ge 1 \} $, since the coefficients may depend on $n$. Also notice that in model (\ref{SubordModel}) $ Y_{nk}^{(\nu)} $ is well defined for $ k > n $.

Model (\ref{SubordModel}) implies that the cross-sectional as well as serial correlations have a specific structure, since
\begin{align}
\label{CovEq1}
 \Cov( Y_{nt}^{(\nu)}, Y_{nt}^{(\mu)} ) 
 &= \sum_{j=0}^\infty c_{nj}^{(\nu)} c_{nj}^{(\mu)} \sigma_{t-j}^2 \\
\label{CovEq2}
 \Cov( Y_{nt}^{(\nu)}, Y_{n,t+h}^{(\mu)} ) &= \sum_{j=0}^\infty c_{nj}^{(\nu)} c_{n,j+h}^{(\mu)} \sigma_{t-j}^2,
\end{align} 
for $ h > 0 $, $ 1 \le \nu, \mu \le d_n $ and all $t$.  Consequently, the cross-sectional variance-covariance matrix $ \Var( \vecY_{nt} ) $ is given by 
\begin{equation}
\label{SigmaSeries}
  \bfSigma_n[t] = \matC_n \Lambda \matC_n'
  = \sum_{j=0}^\infty \sigma_{t-j}^2 \vecc_{nj} \vecc_{nj}',  \quad \Lambda = \operatorname{diag}( \sigma_0^2, \sigma_1^2, \dots ),
\end{equation}
where $ \matC_n = ( c_{nj}^{(\nu)} )_{1 \le \nu \le d_n, 1 \le j} $  is the $ (d_n \times \infty) $-dimensional matrix with column vectors 
$
  \vecc_{nj} = (c_{nj}^{(1)}, \dots, c_{nj}^{(d_n)})' $, $ j \ge 0.
$
The lag $h$ serial covariance matrix attains the representation
\[ 
  \bfSigma_n(h) = E( \vecY_{nt} \vecY_{n,t+h}' ) = \matC_n \Lambda (L^{-h} \matC_n)'
\] 
where $L$ denotes the lag operator that acts on all columns, i.e. $ L^{-1} \matC_n = ( c_{n,i+1}^{(\nu)} )_{i \ge 0, 1 \le  \nu \le d_n} $.

We shall impose the following condition on the decay of the coefficients $ c_{nj}^{(\nu)} $, which is similar to the assumption imposed in \cite{JohnstoneLu2009}, where it controls the principal component eigenvectors within a factor model  (also see our discussion below), and to condition (2.4) in \cite{ChanHorvathHuskova2013}, where it controls the error terms  of a panel time series model.
 
\textbf{Assumption (A)} The sequences $ \{ c_{nj}^{(\nu)} : j  \in \N_0 \} $ satisfy
\begin{equation}
\label{SuffCond}
  \sup_{n \in \N} \max_{1 \le \nu \le d_n} | c_{nj}^{(\nu)} |^2 << (j \vee 1)^{-3/2-\theta}
\end{equation}
for some $ 0 < \theta < 1/2 $. 

Here and in the sequel $ a_n << b_n $ stands for $ a_n = O(b_n) $. Further, we shall write $ a_{nm} \stackrel{n,m}{<<} b_{nm} $ if there exists a constant $ C $ such that $ a_{nm} \le C b_{nm} $ for all $n, m$ 

Indeed, (\ref{SuffCond}) covers not only short memory processes for which the covariances, say, $ r_k = E(X_0X_k) $, are summable, i.e. $ \sum_k |r_k| < \infty $, but
also many long-range dependent series. An example for the latter is fractionally integrated noise of order $ d \in (-1/2, 1/4-\theta/2 ) $, i.e. a stationary solution of the equation 
\[
  (1-L)^d X_t = \epsilon_t, 
\]
where $ \{ \epsilon_t \} $ is a white-noise series, that is given by
$ X_t = \sum_{k=0}^\infty \theta_k \epsilon_{t-k} $ with coefficients
$ \theta_k = \Gamma(k+d)/(\Gamma(k+1) \Gamma(d)) \sim k^{d-1}/\Gamma(d) $,
see e.g. \cite{Steland2012}. The growth condition $ O( j^{-3/4-\theta} ) $ for some $ \theta > 0 $ on the coefficients of the linear processes arises also in other works, especially in \cite{WuHuangZheng2010} where the asymptotics of sample autocovariances of a linear process is studied. The case that the coefficients are $ O( j^{-3/4} L(j) ) $ for some slowly-varying function $L$ represents a boundary case. Here one can obtain Gaussian limits for sample autocovariances, see \cite[Theorem~3]{WuHuangZheng2010} and \cite{Hosking1996} for i.i.d. Gaussian innovations, but then the convergence rate changes from $ n^{-1/2} $ to $ (n/\bar{L}_n)^{-1/2}$ where $ \bar{L}_n = \sum_{i=1}^n L^4(i)/i $.



\ifthenelse{\boolean{extendedversion}}{
It is worth noting that, under certain circumstances, one may interpret model (\ref{SubordModel}) as an infinite-dimensional latent one-factor model. For that purpose,
let us assume for a moment that $ \vecY_1, \dots, \vecY_n $ is a stationary series, such that $ \sigma_k^2 = \sigma^2 $ for all $k$. Then, by stationarity, $ \vecY_n = (Y^{(1)}, \dots, Y^{(d_n)} )' $ satisfies 
\begin{align}
\label{EqYn}
  \vecY_n & \stackrel{d}{=} \matC_n \boldsymbol{\epsilon}_n, \quad \boldsymbol{\epsilon}_n = (\epsilon_0, \epsilon_{-1}, \dots), \\
\label{EqSigma}
  \bfSigma_n &= \sigma^2 \matC_n \matC_n', 
\end{align}
where $ \{ \epsilon_n \} $ is the unobservable common factor. 
}{}

\subsection{Some preparatory approximations}

In the main proof we shall study in detail the linear process
$ \sum_{j=0}^\infty c_j^w \epsilon_{k-j} $, $ k \ge 1$, with coefficients
\begin{equation}
\label{DefCw}
  c_j^w = \sum_{\nu=1}^{d_n} w_\nu c_j^{(\nu)}, \qquad j \ge 0,
\end{equation}
associated to a weighting vector $ \vecw_n $. Behind our main results are martingale approximations related to that linear process, whose definitions require the following quantities, which are controlled under Assumption (A) by virtue of Lemma~\ref{TechnicalLemma} given below. Let
\begin{equation}
  f_{0,j}^{(n)} = f_{0,j}^{(n)}( \vecv_n, \vecw_n ) 
  = \sum_{\nu, \mu = 1}^{d_n} v_\nu w_\mu c_j^{(\nu)} c_j^{(\mu)},  \qquad j = 0, 1, \dots,
\end{equation}
\begin{equation}
  f_{l,j}^{(n)} = f_{l,j}^{(n)}( \vecv_n, \vecw_n ) 
  = \sum_{\nu, \mu = 1}^{d_n} v_\nu w_\mu [ c_j^{(\nu)} c_{j+l}^{(\mu)} + c_j^{(\mu)} c_{j+l}^{(\nu)} ], \qquad l = 1, 2, \dots; \ j = 0, 1, \dots,
\end{equation}
and
\begin{equation}
  \wt{f}_{l,i}^{(n)} = \wt{f}_{l,i}^{(n)}( \vecv_n, \vecw_n )
  = \sum_{j=i}^{\infty} f_{l,j}^{(n)} 
  = \sum_{j=i}^\infty \sum_{\nu, \mu = 1}^{d_n} v_\nu w_\mu [c_j^{(\nu)} c_{j+l}^{(\mu)} + c_j^{(\mu)} c_{j+l}^{(\nu)} ], \qquad l, i  = 0, 1, \dots.
\end{equation}
Clearly, the quantities $ \wt{f}_{l,i}^{(n)} $ have the scaling property
\begin{equation}
\label{FTildeScaling}
\wt{f}_{l,i}^{(n)}( s_1 \vecv_n, s_2 \vecw_n ) =
  s_1 s_2 \wt{f}_{l,i}^{(n)}( \vecv_n, \vecw_n )
\end{equation}
for arbitrary $ s_1, s_2 \in \R $. 
\ifthenelse{\boolean{extendedversion}}{
In addition, under Assumption (A) we have for $ \eta > 0 $ 
\begin{align}
\label{FProp1}
  | f_{0,j}^{(n)}( \vecv_n, \vecw_n ) | &\le C_1 \| \vecv_n \|_{\ell_1} \| \vecw_n \|_{\ell_1} (1 \vee j)^{-3/2-\theta/2}, \qquad j \ge 0, \\
\label{FProp2}
  | f_{l,j}^{(n)}( \vecv_n, \vecw_n ) | &\le C_2 \| \vecv_n \|_{\ell_1} \| \vecw_n \|_{\ell_1} (1\vee j)^{-1-\theta(1/2+\eta)} (1 \vee l)^{-1/2-\theta(1/2-\eta)}, \qquad l, j \ge 1, \\
\label{FProp3}
  | \wt{f}_{l,i}^{(n)}( \vecv_n, \vecw_n ) | &\le C_3 \| \vecv_n \|_{\ell_1} \| \vecw_n \|_{\ell_1} i^{-\theta(1/2+\eta)} (1 \vee l)^{-1/2-\theta(1/2-\eta)}, \qquad l \ge 0, i \ge 1, \\
\label{FProp4}
  | \wt{f}_{l,0}^{(n)} | & \le C_4 \| \vecv_n \|_{\ell_1} \| \vecw_n \|_{\ell_1} (1 \vee l)^{-3/4 - \theta/2}, \qquad l \ge 0,
\end{align}
for constants $C_1, C_2, C_3 $ not depending on $ n $ and the weighting vectors.
}{}

\begin{lemma}
\label{TechnicalLemma}
Suppose that $ \vecv_n, \vecw_n $ have uniformly bounded $ \ell_1 $-norm in the sense of equation~(\ref{l1Condition}). Then Assumption (A) implies 
\begin{equation}
\label{Ass1}
  \sup_{n \in \N} \sum_{i=1}^\infty \sum_{l=0}^\infty (\wt{f}_{l,i}^{(n)} -\wt{f}_{l,i+n'}^{(n)}  )^2 \le C (n')^{1-\theta}, \qquad \text{for all $ n' = 1, 2, \dots $},
\end{equation}
\begin{equation}
\label{Ass2}
  \sup_{n \in \N} \sum_{k=1}^{n'} \sum_{r=0}^\infty (\wt{f}_{r+k,0}^{(n)} )^2 \le C (n')^{1-\theta}, \qquad \text{for all $ n' = 1, 2, \dots $},
\end{equation}
\begin{equation}
\label{Ass3}
  \sup_{n \in \N} \sum_{k=1}^{n'} \sum_{l=0}^\infty (\wt{f}_{l,k}^{(n)} )^2  \le C (n')^{1-\theta}, \qquad \text{for all $ n' = 1, 2, \dots $},
\end{equation}
where the constant may differ from line to line and depends on the weighting vectors only through their $ \ell_1 $--norms.
There exist
\begin{equation}
\label{AlphaN}
 \alpha_n^2 = \alpha_n^2( \vecv_n, \vecw_n ) \ge 0, \qquad n \ge 1, 
\end{equation}
such that
\begin{equation}
\label{Ass4}
  (\wt{f}^{(n)}_{00})^2 \sum_{j=1}^{n'} ( \gamma_{m'+j} - \sigma_{m'+j}^4 )
+ \sum_{j=1}^{n'} \sum_{l=1}^{j-1} ( \wt{f}_{j-l,0}^{(n)} )^2 \sigma_{m'+j}^2 \sigma_{m'+l}^2 - n' \alpha_n^2
\le C (n')^{1-\theta},
\end{equation}
for all $ n', m' = 0, 1, \cdots $. 

Further, if
$ \vecv_n, \vecw_n, \wt{\vecv}_n, \wt{\vecw}_n $, $ n \ge 1 $, have uniformly
bounded $ \ell_1 $-norms, then there exist
\begin{equation}
\label{BetaNuMu}
 \beta_n = \beta_n( \vecv_n,  \vecw_n, \wt{\vecv}_n, \wt{\vecw}_n  ),
 \qquad n \ge 1,
\end{equation}
with 
\begin{align}
\label{Ass5}
& \nonumber \wt{f}_{0,0}^{(n)}( \vecv_n, \vecw_n )
  \wt{f}_{0,0}^{(n)}( \wt{\vecv}_n, \wt{\vecw}_n ) 
  \sum_{j=1}^{n'} ( \gamma_{m'+j} - \sigma^4_{m'+j} ) \\
& \qquad \qquad   +
  \sum_{j=1}^{n'} \sum_{l=1}^{j-1} \wt{f}_{j-l,0}^{(n)}( \vecv_n, \vecw_n ) 
\wt{f}_{j-l,0}^{(n)}( \wt{\vecv}_n, \wt{\vecw}_n ) \sigma^2_{m'+j} \sigma^2_{m'+l} \\ \nonumber
& \qquad \qquad - n' \beta_n(  \vecv_n,  \vecw_n, \wt{\vecv}_n, \wt{\vecw}_n  ) 
\stackrel{n',m'}{<<} (n')^{1-\theta}.
\end{align}
\end{lemma}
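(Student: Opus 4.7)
The plan is to first establish the pointwise decay bounds (\ref{FProp1})--(\ref{FProp4}) on $f_{l,j}^{(n)}$ and $\wt{f}_{l,i}^{(n)}$, and then to deduce (\ref{Ass1})--(\ref{Ass5}) by sum splitting and Abel summation. Under Assumption~(A) one has $|c_{nj}^{(\nu)}| \le C(1\vee j)^{-3/4-\theta/2}$ uniformly in $n,\nu$. Bound (\ref{FProp1}) then follows by pulling absolute values through the double sum and factoring out the $\ell_1$-norms of $\vecv_n,\vecw_n$. For (\ref{FProp2}) the key step is the interpolation $(j+l)^{-\alpha} \le j^{-\alpha\phi}\, l^{-\alpha(1-\phi)}$, valid for $\phi\in[0,1]$ and $j,l\ge 1$ since $j+l\ge \max(j,l)\ge j^{\phi}l^{1-\phi}$; applying this with $\alpha=3/4+\theta/2$ and the choice $\phi=(1/4+\theta\eta)/(3/4+\theta/2)\in[0,1]$ yields the exponents $-1-\theta(1/2+\eta)$ and $-1/2-\theta(1/2-\eta)$ required in (\ref{FProp2}). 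The tail bounds (\ref{FProp3}) and (\ref{FProp4}) then follow from (\ref{FProp1})--(\ref{FProp2}) by summing in $j\ge i$ (respectively $j\ge 0$), using that the $j$-exponent $1+\theta(1/2+\eta)$ exceeds one.

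For (\ref{Ass1}) I would split the outer sum at $i=n'$. On $1\le i\le n'$, use $(\wt{f}_{l,i}^{(n)}-\wt{f}_{l,i+n'}^{(n)})^2 \le 4(\wt{f}_{l,i}^{(n)})^2$ together with (\ref{FProp3}); summing first in $l$ (where $\sum_l (1\vee l)^{-1-\theta(1-2\eta)}<\infty$) and then in $i$ produces $C \sum_{i=1}^{n'} i^{-\theta(1+2\eta)} \le C (n')^{1-\theta(1+2\eta)}$. On $i>n'$, write the difference as $\sum_{j=i}^{i+n'-1} f_{l,j}^{(n)}$, apply Cauchy--Schwarz to bound its square by $n'\sum_{j=i}^{i+n'-1}(f_{l,j}^{(n)})^2$, exchange the order of summation (each $j$ is hit in at most $n'$ values of $i$), and invoke (\ref{FProp2}) to bound $(n')^2 \sum_{j>n'} (f_{l,j}^{(n)})^2$ by the same rate. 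Taking $\eta\downarrow 0$ gives the target $(n')^{1-\theta}$. Bound (\ref{Ass2}) is immediate from (\ref{FProp4}): $\sum_{r\ge 0}(r+k)^{-3/2-\theta}\le C k^{-1/2-\theta}$, so $\sum_{k=1}^{n'} k^{-1/2-\theta}\le C(n')^{1/2-\theta}\le C(n')^{1-\theta}$. Bound (\ref{Ass3}) is obtained exactly as in the first half of (\ref{Ass1}).

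For (\ref{Ass4}) and (\ref{Ass5}) I would switch the order of summation to obtain
\[
  \sum_{j=1}^{n'}\sum_{l=1}^{j-1} (\wt{f}_{j-l,0}^{(n)})^2 \sigma_{m'+j}^2 \sigma_{m'+l}^2
  = \sum_{k=1}^{n'-1}(\wt{f}_{k,0}^{(n)})^2 \sum_{l=1}^{n'-k}\sigma_{m'+l+k}^2\sigma_{m'+l}^2.
\]
In the stationary-innovation case $\sigma_k^2\equiv\sigma^2$, $\gamma_k\equiv\gamma$ (or, more generally, under a Ces\`aro regularity on these sequences uniform in the shift $m'$), I then set
\[
  \alpha_n^2 := (\wt{f}_{0,0}^{(n)})^2(\gamma-\sigma^4) + \sigma^4\sum_{k=1}^\infty (\wt{f}_{k,0}^{(n)})^2,
\]
and define $\beta_n$ by the analogous bilinear formula with $(\wt{f}_{k,0}^{(n)})^2$ replaced by the product $\wt{f}_{k,0}^{(n)}(\vecv_n,\vecw_n)\,\wt{f}_{k,0}^{(n)}(\wt{\vecv}_n,\wt{\vecw}_n)$. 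Absolute convergence of these series follows from (\ref{FProp4}). Inserting the reduction $\sum_l \sigma_{m'+l+k}^2\sigma_{m'+l}^2 = (n'-k)\sigma^4$ and applying the Abel identity $\sum_{k=1}^{n'-1}(n'-k)x_k = n'\sum_{k\ge 1}x_k - n'\sum_{k\ge n'}x_k - \sum_{k=1}^{n'-1}k\,x_k$ with $x_k=(\wt{f}_{k,0}^{(n)})^2$, the difference from $n'\alpha_n^2$ reduces to the tail $n'\sum_{k\ge n'}(\wt{f}_{k,0}^{(n)})^2$ and the shortfall $\sum_{k=1}^{n'-1}k(\wt{f}_{k,0}^{(n)})^2$; both are controlled by $C(n')^{1-\theta}$ using (\ref{FProp4}). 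The $\gamma$-contribution reduces exactly to $n'(\wt{f}_{0,0}^{(n)})^2(\gamma-\sigma^4)$. The argument for (\ref{Ass5}) is identical once squares are replaced by products.

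The hardest part is the interpolation step delivering (\ref{FProp2})--(\ref{FProp4}) with exponents that simultaneously make the $j$-sums tail-integrable (exponent strictly greater than one) and keep the $l$-exponent above $1/2$ so that $\sum_l(1\vee l)^{-1-\theta(1-2\eta)}$ converges; the tuning parameter $\eta$ must also thread the needle to deliver exactly the rate $(n')^{1-\theta}$ in (\ref{Ass1}) and (\ref{Ass3}), which forces $\eta\downarrow 0$ at the end. A secondary subtlety is that when the innovation variances $\sigma_k^2$ and fourth moments $\gamma_k$ vary with $k$, one cannot define $\alpha_n^2$ and $\beta_n$ independently of the shift $m'$ without imposing additional Ces\`aro regularity on these sequences, a hypothesis that the i.i.d.\ innovation case trivially satisfies.
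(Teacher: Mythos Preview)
Your proposal follows the same route as the paper: reduce to the scalar setting by passing to the aggregated coefficients $c_j^v=\sum_\nu v_\nu c_{nj}^{(\nu)}$, $c_j^w=\sum_\mu w_\mu c_{nj}^{(\mu)}$, observe that Assumption~(A) together with the $\ell_1$--bound forces $(c_j^v)^2,(c_j^w)^2\ll (1\vee j)^{-3/2-\theta}$ uniformly in $n$, and then run the arithmetic behind \cite[Remark~3.2]{Kouritzin1995}. The paper simply cites that remark; you unpack it, which is fine.

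There is one localized slip. You write that (\ref{FProp4}) follows from (\ref{FProp1})--(\ref{FProp2}) ``by summing in $j\ge 0$''. Summing (\ref{FProp2}) in $j$ only yields $|\wt f_{l,0}^{(n)}|\le C\,l^{-1/2-\theta(1/2-\eta)}$, not the exponent $-3/4-\theta/2$; and you cannot send $\eta\downarrow 0$ a posteriori because the constant in (\ref{FProp2}) depends on $\eta$. The clean fix is a direct convolution bound: with $\alpha=3/4+\theta/2\in(3/4,1)$,
\[
  |\wt f_{l,0}^{(n)}|
  \;\le\; C\sum_{j\ge 0}(1\vee j)^{-\alpha}(j+l)^{-\alpha}
  \;\le\; C\Bigl[\,l^{-\alpha}\!\sum_{j\le l}(1\vee j)^{-\alpha}+\sum_{j>l}j^{-2\alpha}\Bigr]
  \;\le\; C\,l^{\,1-2\alpha}
  \;=\;C\,l^{-1/2-\theta}.
\]
This exponent is weaker than the one printed in (\ref{FProp4}) but is all you need: plugging $(\wt f_{l,0}^{(n)})^2\le C\,l^{-1-2\theta}$ into your calculations for (\ref{Ass2}) and for the tail and shortfall terms in (\ref{Ass4})--(\ref{Ass5}) gives $C(n')^{1-2\theta}\le C(n')^{1-\theta}$. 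Incidentally, for (\ref{Ass1}) and (\ref{Ass3}) your estimate already produces $(n')^{1-\theta(1+2\eta)}$ for any fixed $\eta>0$, which is \emph{stronger} than $(n')^{1-\theta}$, so no limit in $\eta$ is required there either. Your closing remark that (\ref{Ass4})--(\ref{Ass5}) in the non--i.i.d.\ innovation case tacitly presuppose a Ces\`aro regularity on $(\sigma_k^2)$ and $(\gamma_k)$ is well taken and matches the paper's Remark~\ref{TheRemark}.
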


\begin{remark} 
\label{TheRemark}
If the moments up to the order $ 4 $ are stationary
such that $ \gamma_k = \gamma $ and $ \sigma_k^2 = \sigma^2 $, say, (\ref{Ass4})
is a consequence of (\ref{Ass2}) and
\[
  \alpha_n^2(\vecv_n, \vecw_n) = (\gamma - \sigma^4) [ \wt{f}^{(n)}_{0,0}(\vecv_n, \vecw_n) ]^2 + \sigma^4 \sum_{l=1}^\infty [\wt{f}_{l,0}^{(n)}(\vecv_n, \vecw_n) ]^2,
\]
as well as
\[
  \beta_n( \vecv_n,  \vecw_n, \wt{\vecv}_n, \wt{\vecw}_n  )
  = \wt{f}^{(n)}_{0,0}(\vecv_n, \vecw_n) \wt{f}^{(n)}_{0,0}( \wt{\vecv}_n, \wt{\vecw}_n )
  (\gamma - \sigma^4) + \sigma^4 \sum_{l=1}^\infty 
  \wt{f}^{(n)}_{l,0}(\vecv_n, \vecw_n) \wt{f}^{(n)}_{l,0}( \wt{\vecv}_n, \wt{\vecw}_n ).
\]
Note also that, in general, (\ref{Ass1})--(\ref{Ass3}) and (\ref{Ass4}) ensure that the quantities $ \alpha_n^2(\vecv_n, \vecw_n) $ and $ \beta_n(\vecv_n,  \vecw_n, \wt{\vecv}_n, \wt{\vecw}_n  ) $ are bounded. 

Suppose that the coefficients of the time series as well as the weighting vectors do not depend on $n$, i.e. $ c_{nj}^{(\nu)} = c_j^{(\nu)} $, $ \nu =1, 2, \dots, d_n $, $ j \ge 0 $, and $ w_{n\nu} = w_\nu $, $ v_{n\nu} = v_\nu $, $ \nu = 1, 2, \dots$. Since then $ \wt{f}_{0,0}^{(n)} = \sum_{j=0}^\infty \left( \sum_{\nu=1}^{d_n} [ c_j^{(\nu)} ]^2 \right)^2 $ and
\[
  \sum_{\ell=1}^\infty [ \wt{f}_{\ell,0}^{(n)}( \vecv_n, \vecw_n ) ]^2
  = \sum_{\ell=1}^\infty \left( \sum_{j=0}^\infty \sum_{\nu, \mu=1}^{d_n} v_\nu w_\mu 
  [ c_j^{(\nu)} c_{j + \ell}^{(\nu)} + c_j^{(\mu)} c_{j+\ell}^{(\nu)} ] \right)^2,
\]
we have $ \alpha_n(\vecv_n, \vecw_n) \to \alpha^* $, as $ n \to \infty $. The limit $ \alpha^* $ is obtained by replacing formally $ d_n $ by $ \infty $. Similarly, one obtains the convergence $ \beta_n( \vecv_n,  \vecw_n, \wt{\vecv}_n, \wt{\vecw}_n  ) \to \beta^* $, as $ n \to \infty $, and an explicit formula for $ \beta^* $.
\end{remark}

\section{Asymptotics}
\label{Asymptotics}

For many sequences, $ X_n $, $ n \ge 1 $, of random variables, such as sums of i.i.d.  random variables with finite second moment, a strong approximation with rate holds true, i.e. after redefining the process on a new probability space, there exists a Brownian motion $ B(t) $, $ t \ge 0 $, such that, on that new space, 
\[
  | X_{t} - \sigma B(t) | = O( t^{1/2-\lambda} ), \qquad \text{for all $t > 0$},
\]
a.s., as $ n \to \infty $, for some positive constant $ \sigma $ and a constant $ \lambda > 0 $.
Results of this type date back to the seminal work of \cite{KMT1975}, \cite{KMT1976} and have been extended and refined since then, in particular to dependent sequences attaining values in a Hilbert space and martingales, see e.g. \cite{Philipp1986}.   Such a strong approximation result also yields an approximation of the rescaled \cadlag process $ n^{-1/2} X_{\trunc{tn}} $, $ t \in [0,1] $, by the Brownian motion $  \sigma B(t) $, $ t \in [0,1] $, and implies the FCLT, i.e. the weak convergence
\[
  n^{-1/2} X_{\trunc{tn}} \Rightarrow \sigma B(t),
\]
as $ n \to \infty $, where $ \Rightarrow $ signifies weak convergence in the Skorohod space $ D[0,1] $. This, in turn, implies the weak convergence of continuous mappings of $  n^{-1/2} X_n( \trunc{n \bullet } ) $. Observe that when $ X_n = \sum_{i=1}^n \xi_i $ is a sum of i.i.d. random variables with $ E(\xi_1^2) < \infty $, then we obtain the classical Donsker theorem and, for $ t = 1 $, the CLT.

In order to obtain strong approximations for the bilinear form $ Q( \vecv_n, \vecw_n ) $, we shall derive a martingale approximation for a partial sum associated to $ Q(\vecv_n, \vecw_n) $, which eases the verification of general conditions for sequences taking values in Hilbert spaces due to \cite{Philipp1986} to obtain a strong approximation. 

We need to introduce further notation. Define
\begin{align}
\label{SigmaDefHat}
  \wh{\bfSigma}_{nk} & = \left( \sum_{i=1}^k Y_i^{(\nu)} Y_i^{(\mu)} \right)_{1 \le \nu, \mu \le d_n}, \\
\label{SigmaDef}
  \bfSigma_{nk} & = \left( \sum_{i=1}^k E Y_i^{(\nu)} Y_i^{(\mu)} \right)_{1 \le \nu, \mu \le d_n},
\end{align}
for $ n, k \ge 1 $. To be precise, our results shall deal with 
\begin{equation}
\label{DefDnk}
  D_{nk} = \vecv_n'( \wh{\bfSigma}_{nk} - \bfSigma_{nk} ) \vecw_n,
  \qquad n, k \ge 1,
\end{equation}
and the associated \cadlag processes
\begin{equation}
\label{DefCalD}
  \calD_n(t) = \vecv_n' n^{-1/2}( \wh{\bfSigma}_{n, \trunc{nt}} - \bfSigma_{n, \trunc{nt}} ) \vecw_n, \qquad t \in [0,1], n \ge 1,
\end{equation}
and
\begin{equation}
\label{DefCalD0}
  \calD_n^0(t) = \calD_n( \trunc{nt}/ n) 
  - \trunc{nt}/n\calD_n(1), \qquad t \in [0,1], n \ge 1.
\end{equation}
Observe that $ \calD_n^0(t) $ does not depend on the true variance-covariance matrices $ \{ \bfSigma_{nk} : 1 \le k \le n \} $. Processes
of this form are therefore frequently used in change-point analysis, 
see, e.g., \cite{CsoergoeHorvath1997} and \cite{Steland2015}.
If the dependence of the above quantities on $ \vecv_n, \vecw_n $ matters, we shall indicate this in our notation and then write
\[
  D_{nk}( \vecv_n, \vecw_n), \calD_n(t; \vecv_n, \vecw_n ), \calD_n^0(t; \vecv_n, \vecw_n). 
\]

Recalling that $ \wh{\bfSigma}_n = n^{-1} \wh{\bfSigma}_{n,n} $, cf. (\ref{DefSigmaHat}) and (\ref{SigmaDefHat}),
\[
  \calD_n(1) = \vecv_n' \sqrt{n}( \wh{\bfSigma}_n - \bfSigma_n ) \vecw_n, \qquad n \ge 1,
\]
is the centered and scaled version of the bilinear form $ Q( \vecv_n, \vecw_n ) $,
where 
\[
  \bfSigma_n = E \wh{\bfSigma}_n = \frac{1}{n} \sum_{i=1}^n E( \vecY_{ni} \vecY_{ni} )'.
\]
If $ \{ \vecY_{ni} : 1 \le i \le n \} $ is stationary, then $ \bfSigma_n $ simplifies to  $ \bfSigma_n = E( \vecY_{n1} \vecY_{n1}') $.

For two weighting vectors $ \vecv_n $ and $ \vecw_n $ we may associate the
martingales
\begin{equation}
\label{ApproxMartingale}
  M_k^{(n)}( \vecv_n, \vecw_n )
  = \wt{f}_{00}^{(n)}( \vecv_n, \vecw_n ) \sum_{i=0}^k (\epsilon_i^2 - \sigma_i^2)
   +
   \sum_{i=0}^k \epsilon_i \sum_{l=1}^\infty \wt{f}_{l,0}^{(n)}( \vecv_n, \vecw_n )
   \epsilon_{i-l}, \qquad k, n \ge 0,
\end{equation}
and the corresponding \cadlag processes
\[
  \calM_n(t; \vecv_n, \vecw_n) = n^{-1/2} M_{\trunc{nt}}^{(n)} ( \vecv_n, \vecw_n ), \qquad t \in [0,1], n \ge 1.
\]
It turns out that, under the assumptions of the article, those martingales are close to $ D_{nk}( \vecv_n,\vecw_n)  $ and $ \calD_n(t;\vecv_n,\vecw_n) $, respectively, which is the key to obtain large-sample asymptotics in terms of strong approximations to infer second order information, i.e. variances and covariances, of projections. 

The following theorem shows that bilinear forms of uniformly bounded $ \ell_1 $-projections satisfy a strong approximation result that implies the functional central limit theorem and therefore also the central limit theorem. Recall that to a standard Brownian motion $ B_n(t) $, $ t \in [0,\infty) $, we may define the rescaled version $ \overline{B}_n(s) = n^{-1/2} B_n(s n ) $, $ s \in [0,1] $, which satisfies $ E( \overline{B}_n(s) \overline{B}_n(t) ) = \min(s,t) $ for $ s, t \in [0,1] $. For convenience, we shall denote $ \overline{B}_n $ also by $ B_n $ and call it {\em the $ [0,1]$--version of $ B_n $}.

\begin{theorem}
\label{Th1} Suppose $ \vecY_{ni} $,  $1 \le i \le n $, $ n \ge 1 $, is a vector time series according to model (\ref{SubordModel}) that satisfies Assumption (A). Let $ \vecv_n $ and $ \vecw_n $ be weighting vectors with uniformly bounded $ \ell_1 $--norm in the sense of (\ref{l1Condition}). Then, for each $n \in \N $, there exists an equivalent version of $ D_{nk}( \vecv_n, \vecw_n) $ and thus of $ D_n(t;\vecv_n, \vecw_n) $, $ t \ge 0 $, again denoted by $ D_{nk}(\vecv_n, \vecw_n) $ and $ D_n(t; \vecv_n, \vecw_n) $, and a standard Brownian motion $ \{ B_n(t) : t \ge 0 \} $, which depends on $ (\vecv_n, \vecw_n) $, i.e. $ B_n(t) = B_n(t;\vecv_n, \vecw_n) $, both defined on some probability space $ (\Omega_n, \mathcal{F}_n, P_n ) $, such that for some $ \lambda > 0 $ and a constant $ C_n $
\begin{equation}
\label{StrongApprox0}
  | D_{nt}(\vecv_n, \vecw_n ) - \alpha_n(\vecv_n, \vecw_n)  B_n(t) | \le C_n t^{1/2-\lambda}, \qquad \text{for all $ t > 0 $ a.s.,}
\end{equation}
and
\begin{equation}
\label{StrongApprox0b}
  \max_{k \le n} \left| D_{nk}(\vecv_n, \vecw_n ) - \frac{k}{n} D_{nn}(\vecv_n, \vecw_n ) - \alpha_n(\vecv_n, \vecw_n )
  [ B_n(k) - \frac{k}{n} B_n(n) ] \right| \le 2 C_n k^{1/2-\lambda}
\end{equation}
for $ k \le n $. If 
\begin{equation}
\label{CN_Cond}
 C_n n^{-\lambda} = o(1),
 \end{equation} 
as $ n \to \infty $, this implies the strong approximation
\begin{equation}
\label{StrongApprox}
  \sup_{t \in [0,1]} | \calD_n(t; \vecv_n, \vecw_n) - \alpha_n(\vecv_n, \vecw_n) B_n(\trunc{nt}/n) | =o(1),
  \qquad \text{a.s.},
\end{equation}
as $ n \to \infty $, for the $ [0,1]$--version of $ B_n $. Further,
\begin{equation}
\label{StrongApprox0_1}
\max_{k \le n} \left| \calD_n^0\left( k/n; \vecv_n, \vecw_n \right) - \alpha_n(\vecv_n, \vecw_n) B_n^0\left( k/n \right) \right| = o(1),
\end{equation}
and
\begin{equation}
\label{StrongApprox0_2}
\sup_{t \in [0,1]} \left| \calD_n^0(t; \vecv_n, \vecw_n) - \alpha_n( \vecv_n, \vecw_n) B_n^0 \left(  \trunc{nt}/ n  \right) \right| = o(1), 
\end{equation}
a.s., as $ n \to \infty $, where $ B_n^0(t) = B_n(t) - t B_n(1) $, $ t \in [0,1] $, denotes the Brownian bridge associated to the $ [0,1] $--version of $ B_n $.
\end{theorem}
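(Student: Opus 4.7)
The plan is to reduce $D_{nk}(\vecv_n,\vecw_n)$ to a real-valued martingale for which a strong approximation of Koml\'os--Major--Tusn\'ady / Philipp type is available, and then transfer the resulting Brownian-motion bound back to $\calD_n$ and $\calD_n^0$ by simple rescaling and triangle inequality.

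First I would unwind the bilinear form. Setting $c_j^v = \sum_\nu v_\nu c_j^{(\nu)}$ and $c_j^w$ analogously (as in (\ref{DefCw})), the projection $\vecv_n'\vecY_{ni} = \sum_{j\ge 0} c_j^v\epsilon_{i-j}$ is a scalar linear process, and likewise for $\vecw_n'\vecY_{ni}$. Writing out the product and subtracting its mean,
\begin{equation*}
 D_{nk} \;=\; \sum_{i=1}^{k}\sum_{j,l\ge 0} c_j^v c_l^w\bigl(\epsilon_{i-j}\epsilon_{i-l}-\mathbf{1}\{j=l\}\sigma_{i-j}^2\bigr).
\end{equation*}
Grouping terms by the lag $r=|l-j|$ and interchanging summation leads, after a standard reorganization, to a representation that differs from the martingale $M_k^{(n)}(\vecv_n,\vecw_n)$ of (\ref{ApproxMartingale}) only by \emph{boundary terms} involving the tails $\wt f_{l,i}^{(n)} - \wt f_{l,i+n'}^{(n)}$ and $\wt f_{r+k,0}^{(n)}$. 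The key ingredients (\ref{Ass1})--(\ref{Ass3}) of Lemma~\ref{TechnicalLemma} are tailored to bound the second moments of precisely these remainders by $O((n')^{1-\theta})$, so by Doob's inequality one gets
\begin{equation*}
 \max_{k\le n}\bigl|D_{nk}(\vecv_n,\vecw_n)-M_k^{(n)}(\vecv_n,\vecw_n)\bigr| \;=\; O\!\bigl(n^{(1-\theta)/2}\,\ell(n)\bigr)\quad\text{a.s.}
\end{equation*}
for some slowly varying $\ell$; since $\theta>0$, this remainder is absorbed into the error term $C_n t^{1/2-\lambda}$ for a suitably small $\lambda\in(0,\theta/2)$.

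Next I would apply a strong approximation to the scalar martingale $M_k^{(n)}$. Its increments $\xi_i^{(n)}=\wt f_{00}^{(n)}(\epsilon_i^2-\sigma_i^2)+\epsilon_i\sum_{l\ge 1}\wt f_{l,0}^{(n)}\epsilon_{i-l}$ form martingale differences in the natural filtration generated by $\{\epsilon_j\}$. Their $(2+\delta/2)$-moments are controlled uniformly thanks to (\ref{BoundedFourthMoments}) together with (\ref{FProp3})--(\ref{FProp4}), and their predictable quadratic characteristic satisfies $\sum_{j=1}^{n'}E(\xi_{m'+j}^2\mid\calF_{m'+j-1})-n'\alpha_n^2 = O((n')^{1-\theta})$ by (\ref{Ass4}). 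These are exactly the hypotheses of the Hilbert-space (scalar specialization) martingale strong approximation of \cite{Philipp1986}: on a richer probability space one can redefine $M_k^{(n)}$ together with a standard Brownian motion $B_n$ so that $|M_k^{(n)}-\alpha_n B_n(k)|\le C_n'\, k^{1/2-\lambda}$ a.s.\ for some $\lambda>0$. Combining with the martingale approximation gives (\ref{StrongApprox0}); (\ref{StrongApprox0b}) then follows from (\ref{StrongApprox0}) by one triangle inequality applied at times $k$ and $n$.

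For the rescaled statements, divide by $\sqrt n$: under the rate assumption $C_n n^{-\lambda}=o(1)$, the bound $C_n t^{1/2-\lambda}\le C_n n^{1/2-\lambda}$ for $t\le n$ gives $n^{-1/2}\cdot C_n n^{1/2-\lambda}=C_n n^{-\lambda}=o(1)$, uniformly in $t\in[0,1]$. Plugging in $t=\trunc{ns}$ and using $B_n(\trunc{nt})/\sqrt n \stackrel{d}{=} \overline B_n(\trunc{nt}/n)$ yields (\ref{StrongApprox}), and combining the $k$ and $n$ approximations as in (\ref{StrongApprox0b}) gives (\ref{StrongApprox0_1}) and (\ref{StrongApprox0_2}) with the Brownian bridge $B_n^0$.

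The main obstacle I expect is Step~2: turning the multiple sum representation of $D_{nk}$ into $M_k^{(n)}$ plus a remainder whose second-moment bound really is sharp enough to be absorbed into $k^{1/2-\lambda}$. This requires delicate bookkeeping of the four index ranges $(i,j,l)$ with the restriction $1\le i\le k$, $i-j\ge 0$, and the conversion of differences $c_j^v c_l^w$ into the symmetric combinations underlying $f_{l,j}^{(n)}$; Lemma~\ref{TechnicalLemma} is exactly calibrated for this bookkeeping, which is why the verification of (\ref{Ass1})--(\ref{Ass4}) sits at the heart of the argument. The application of Philipp's theorem, by contrast, is essentially a black box once the moment and quadratic-variation calibrations are in place.
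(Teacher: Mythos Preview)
Your plan is correct and follows essentially the same route as the paper: reduce the bilinear form to a product of two scalar linear processes $Y_i(\vecv_n)Y_i(\vecw_n)$, invoke the martingale approximation $M_k^{(n)}(\vecv_n,\vecw_n)$ of (\ref{ApproxMartingale}) with the remainder controlled by Lemma~\ref{TechnicalLemma}, and then feed everything into Philipp's (1986) strong approximation; the rescaled statements for $\calD_n$ and $\calD_n^0$ are then immediate.

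There is one organizational difference worth noting. You propose to first bound $\max_{k\le n}|D_{nk}-M_k^{(n)}|$ almost surely and then apply Philipp's theorem to the martingale differences of $M_k^{(n)}$. The paper instead applies Philipp's theorem directly to the non-martingale summands $\xi_k^{(n)}=Y_k(\vecv_n)Y_k(\vecw_n)-E[\,\cdot\,]$ and uses the martingale approximation only to verify Philipp's conditions (I) and (II) in $L_1$, namely $\|E(S^{(n)}_{n',m'}\mid\calF_{m'})\|_1\ll (n')^{1/2-\varepsilon}$ and $\|E[(S^{(n)}_{n',m'})^2\mid\calF_{m'}]-n'\alpha_n^2\|_1\ll (n')^{1-\varepsilon}$. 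The advantage of the paper's ordering is that only $L_2$/$L_1$ control of the remainder $R^{(n)}_{n',m'}$ is needed, whereas your route requires upgrading the $L_2$ bound to an almost-sure maximal bound; your appeal to ``Doob's inequality'' does not by itself give an a.s.\ statement, so you would still need a Borel--Cantelli argument along a subsequence plus a fluctuation bound. This is doable but is extra work that the paper's ordering avoids. Also, be careful with the moment condition: under (\ref{BoundedFourthMoments}) one has $E|\xi_k^{(n)}|^{2+\delta/2}<\infty$, so the ``some $\delta>0$'' in Philipp's condition (III) must be read with this smaller exponent, not the original $4+\delta$.
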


Clearly, (\ref{StrongApprox}) implies 
\begin{equation}
\label{MainCLT}
  | \calD_n(1;\vecv_n, \vecw_n) - \alpha_n(\vecv_n, \vecw_n) B_n(1) | = o(1), \qquad \text{a.s.},
\end{equation}
as $ n \to \infty $, i.e. $ \calD_n(1;\vecv_n, \vecw_n) $ is asymptotically $ \calN(0, \alpha_n^2(\vecv_n, \vecw_n) ) $, suggesting the standardized statistic
\[
  \calD_n^*(\vecv_n, \vecw_n) = \alpha_n^{-1}(\vecv_n, \vecw_n) \calD_n(1; \vecv_n, \vecw_n ),
\]
which is asymptotically standard normal by (\ref{MainCLT}),
to draw statistical inference on the covariance of two  projections $  \vecv_n' \vecY_n $ and $ \vecw_n' \vecY_n $. By virtue of the scaling property (\ref{FTildeScaling}), which carries over to $ \alpha_n(\vecv_n, \vecw_n) $, $ \calD_n^*(\vecv_n, \vecw_n) $ depends on the weighting vectors $ \vecv_n $ and $ \vecw_n $ only through the associated unit vectors $ \vecv_n^* = \vecv_n / \| \vecv_n \| $ and $ \vecw_n^* = \vecw_n / \| \vecw_n \| $.

For $ \vecv_n = \vecw_n $ we obtain the asymptotic normality of the statistic
\[
  \alpha_n^{-1}(\vecv_n,\vecv_n)\sqrt{n} [ \wh{\Var}( \vecv_n' \vecY_n ) - \vecv_n' \bfSigma_n \vecv_n ]
\]
which allows to draw inference on the variance of the projection $ \pi_n = \vecv_n' \vecY_n  $ of $ \vecY_n $ onto $ \operatorname{span} \{ \vecv_n \} $. Estimation of the asymptotic variance parameter is discussed below. 

Clearly, it is of interest to project onto, say $K$, vectors
$ \vecv_{n1}, \dots, \vecv_{nK} $, as discussed in greater detail in Subsection~\ref{Subsec: sparsePCA}. The following generalization to a finite number of bilinear forms provides the required multivariate extension.

Recall that $ \vecB(t) $, $ t \ge 0 $, is a Brownian motion in $ \R^K $ with covariance matrix $ \matV > 0 $, 
if $ \vecB(t) = \matV^{1/2} \vecB_0(t) $ for a standard Brownian motion, i.e. $ \vecB_0(s) \sim \calN( \vecnull, s \matid_K )$ and $ E[ (\vecB_0(s)-\vecB_0(t))(\vecB_0(s)-\vecB_0(t))'] = (s-t) \matid_K $ for $ 0 \le s \le t $. This means, $ \vecB(t) $ is a Gaussian process with independent increments, $ E( \vecB(s) \vecB(t)' ) = \min(s,t) \matV $, $ s, t \ge 0 $, and thus $ \vecB(t) $, $ t \ge 0 $, is characterized by $ \vecV = E( \vecB(1) \vecB(1)' ) $. The latter still makes sense if $ \vecV $ is singular.

\begin{theorem} 
\label{ThK}
Let $ \{ \vecv_{nj}, \vecw_{nj}  : 1 \le j \le K \} $ be weighting vectors of dimension $ d_n $ satisfying condition (\ref{l1Condition}). Then, under the assumptions of Theorem~\ref{Th1}, there exists a $K$--dimensional Brownian motion $ \{ \vecB^{(n)}(t) :  t \in [0,1] \} $ with coordinates $ B_{ni} = B_n(t; \vecv_{ni}, \vecw_{ni} ) $, $ t \in [0,1] $, $ i = 1, \dots, K $, characterized by
\[
  E(  B_n(1; \vecv_{ni}, \vecw_{ni} )  B_n(1; \vecv_{nj}, \vecw_{nj} ) ) = \beta_n( \vecv_{ni}, \vecw_{ni}, \vecv_{nj}, \vecw_{nj} ), 
\]
for $ 1 \le i, j \le K $ with $ i \not= j $ and
$
  E(  B_n^2(1; \vecv_{ni}, \vecw_{ni} ) ) = \alpha_n(  \vecv_{ni}, \vecw_{ni} ),
$
for $ i = 1, \dots, K $, such that 
\begin{equation}
\label{StrongApprox2}
  \left\| \left( \calD_n(t; \vecv_{ni}, \vecw_{ni} ) \right)_{i=1}^K   
   - \left(  B_n( \trunc{nt}/n ; \vecv_{ni}, \vecw_{ni}  ) \right)_{i=1}^K \right\| 
   = o(1),
\end{equation}
a.s., as $ n \to \infty $, where $ \| \bullet \| $ denotes an arbitrary vector norm on $ \R^K $.
\end{theorem}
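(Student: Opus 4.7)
The plan is to reduce Theorem~\ref{ThK} to a $K$-dimensional version of the martingale strong approximation already established in the proof of Theorem~\ref{Th1}. For each pair $(\vecv_{nj}, \vecw_{nj})$, $1 \le j \le K$, form the martingale $M_k^{(n)}(\vecv_{nj}, \vecw_{nj})$ as in (\ref{ApproxMartingale}), and stack them into the $K$-dimensional martingale $\vecM_k^{(n)} = \bigl( M_k^{(n)}(\vecv_{nj}, \vecw_{nj}) \bigr)_{j=1}^K$. Theorem~\ref{Th1} already provides, coordinate-wise, that $D_{nk}(\vecv_{nj}, \vecw_{nj}) - M_k^{(n)}(\vecv_{nj}, \vecw_{nj}) = O(k^{1/2-\lambda})$ a.s., so the only missing ingredient is a \emph{joint} Brownian approximation of $\vecM_k^{(n)}$ with the correct cross-covariance structure.

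The key observation that enables this is that, for any $\boldsymbol{\lambda} \in \R^K$, the scalar process $\boldsymbol{\lambda}' \vecM_k^{(n)}$ is again a martingale of the form (\ref{ApproxMartingale}), but with the coefficients $\wt{f}_{l,0}^{(n)}(\vecv_n, \vecw_n)$ replaced by $g_l^{(\boldsymbol{\lambda})} = \sum_{j=1}^K \lambda_j \wt{f}_{l,0}^{(n)}(\vecv_{nj}, \vecw_{nj})$. Since $\wt{f}$ is bilinear in its arguments and the weighting vectors are uniformly $\ell_1$-bounded, the triangle inequality preserves all the bounds of Lemma~\ref{TechnicalLemma} for the $g_l^{(\boldsymbol{\lambda})}$. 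In particular, the joint quadratic characteristic of $\vecM_k^{(n)}$ grows linearly in $k$ with slope $\vecV_n$, where $\vecV_n[i,j] = \beta_n(\vecv_{ni}, \vecw_{ni}, \vecv_{nj}, \vecw_{nj})$ on off-diagonal entries and $\vecV_n[i,i] = \alpha_n^2(\vecv_{ni}, \vecw_{ni})$ on the diagonal; the error is $O(k^{1-\theta})$ precisely by (\ref{Ass4})--(\ref{Ass5}).

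Next, apply the Hilbert-space (here finite-dimensional) strong approximation for square-integrable martingales of \cite{Philipp1986} to $\vecM_k^{(n)}$. The conditions on the quadratic characteristic and uniform moment bounds of the martingale differences are identical to those verified in the scalar case of Theorem~\ref{Th1}, extended via the polarization argument above. This yields, after passing to an equivalent version on a suitable extended probability space, a $K$-dimensional Brownian motion $\vecB^{(n)}(t)$ with covariance matrix $\vecV_n$ (well defined even when $\vecV_n$ is singular, by setting $\vecB^{(n)} = \vecV_n^{1/2} \vecB_0$) such that $\|\vecM_{\trunc{nt}}^{(n)} - \vecB^{(n)}(\trunc{nt})\| = O(n^{1/2-\lambda})$ a.s., for any norm on $\R^K$. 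Rescaling by $n^{-1/2}$ and combining with the coordinate-wise approximations $D_{nk}(\vecv_{nj}, \vecw_{nj}) \approx M_k^{(n)}(\vecv_{nj}, \vecw_{nj})$ from Theorem~\ref{Th1} gives (\ref{StrongApprox2}).

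The main obstacle is the bookkeeping needed to identify the coordinates of $\vecB^{(n)}$ with the scalar Brownian motions $B_n(t; \vecv_{nj}, \vecw_{nj})$ delivered by Theorem~\ref{Th1}. This is handled as follows: the marginal laws are forced to match because each scalar coordinate of $\vecB^{(n)}$ is a standard Brownian motion with the variance $\alpha_n^2(\vecv_{nj}, \vecw_{nj})$ dictated by Theorem~\ref{Th1}, and a Gaussian process is determined by its covariance. The strong approximation produced by Philipp's theorem is unique in distribution for the joint process, and by a standard Skorokhod-type coupling on an enlarged probability space one may relabel the coordinates of the joint Brownian motion to \emph{be} the $B_n(t; \vecv_{nj}, \vecw_{nj})$ of Theorem~\ref{Th1}, with cross-covariances $\beta_n(\vecv_{ni}, \vecw_{ni}, \vecv_{nj}, \vecw_{nj})$ as required. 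The remaining estimates are routine given the scaling property (\ref{FTildeScaling}) and the uniform bounds derived from Lemma~\ref{TechnicalLemma}.
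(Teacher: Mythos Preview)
Your overall strategy --- apply the Hilbert-space strong approximation of \cite{Philipp1986} to the $K$-dimensional process and identify the limiting covariance via (\ref{Ass4})--(\ref{Ass5}) --- is the same as the paper's. The paper, however, applies Philipp's result directly to the vector of summands
\[
  \bfxi_i^{(n)} = \bigl( Y_{ni}(\vecv_{nj}) Y_{ni}(\vecw_{nj}) - E[Y_{ni}(\vecv_{nj}) Y_{ni}(\vecw_{nj})] \bigr)_{j=1}^K,
\]
verifying conditions (I)--(III) for $\vecS_{n',m'}^{(n)} = \sum_{k=m'+1}^{m'+n'} \bfxi_k^{(n)}$ in the operator norm; the conditional covariance operator condition reduces to the coordinate estimates (\ref{CondVar2}) and the cross term (\ref{L1ApproxBetas}), which follows from (\ref{Ass5}). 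There is no separate passage through the vector martingale $\vecM_k^{(n)}$.

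This matters because your route has a small gap. You assert that Theorem~\ref{Th1} gives, coordinate-wise, $D_{nk}(\vecv_{nj},\vecw_{nj}) - M_k^{(n)}(\vecv_{nj},\vecw_{nj}) = O(k^{1/2-\lambda})$ a.s. It does not: the proof of Theorem~\ref{Th1} only establishes the $L_2$ bound $E(R_{n',m'}^{(n)})^2 \ll (n')^{1-\theta}$ on the remainder, which is used to verify Philipp's conditions for $D_{nk}$ itself, not to produce an almost-sure approximation of $D$ by $M$. To repair this you would need either a Borel--Cantelli argument on the remainder (not supplied) or simply to apply Philipp to $\vecD_{nk}$ directly, which is what the paper does and which makes your martingale detour unnecessary.

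Finally, your ``bookkeeping'' paragraph is superfluous and slightly confused. The statement of Theorem~\ref{ThK} only asserts the \emph{existence} of a $K$-dimensional Brownian motion with the specified covariance matrix; the notation $B_n(t;\vecv_{ni},\vecw_{ni})$ for its coordinates is just labeling and carries no claim that these coincide with the scalar Brownian motions produced separately in Theorem~\ref{Th1} (which live on different enlarged spaces). No Skorokhod coupling or relabeling is needed.
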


Having in mind the application of the above result to change-point detection, see Subsection~\ref{Subsec: change-points},
the following corollary dealing with the frequently used maximally selected CUSUM statistic is of interest.

\begin{corollary} 
\label{Corollary1}
Suppose that $ \vecY_{n1}, \dots, \vecY_{nn} $ is a  $ d_n$--dimensional vector time series following model (\ref{SubordModel}) which  satisfies Assumption (A). Let $ \vecv_n $ and $ \vecw_n $ be weighting vectors with uniformly bounded $ \ell_1$--norm in the sense of (\ref{l1Condition}). Then, after redefining the series on a new probability space, there exist  standard Brownian motions $ B_n $ on  $[0,1] $, such that
\begin{equation}
\label{StrongApprox3}
  \left|
    \max_{k \le n} | \calD_n(k/n;\vecv_n, \vecw_n) | - \alpha_n( \vecv_n, \vecw_n ) \max_{k \le n} |  B_n( k/n ) | 
  \right|   = o(1),
\end{equation}
and
\begin{equation}
\label{StrongApprox0_3}
  \left|
    \max_{k \le n} | \calD_n^0(k/n;  \vecv_n, \vecw_n ) |
    - 
    \alpha_n( \vecv_n, \vecw_n )
    \max_{k \le n} | B_n^0( k/n ) | 
  \right|
    = o(1),
\end{equation}
for the Brownian bridge $ B_n^0 $ associated to $ B_n $, a.s., as $ n \to \infty $.
\end{corollary}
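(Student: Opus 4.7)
The plan is to deduce Corollary~\ref{Corollary1} directly from Theorem~\ref{Th1} via the reverse triangle inequality for the maximum of absolute values. The key observation is that the statements (\ref{StrongApprox3}) and (\ref{StrongApprox0_3}) are of the form ``the max of $|\calD_n|$ is uniformly close to $\alpha_n$ times the max of $|B_n|$'' and that $\alpha_n \ge 0$ by (\ref{AlphaN}), so such an approximation follows automatically from a uniform approximation of $\calD_n$ itself by $\alpha_n B_n$.

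First, I invoke Theorem~\ref{Th1} to redefine the series on a new probability space together with a standard Brownian motion $B_n$ (and thereby its associated Brownian bridge $B_n^0$) so that the strong approximations (\ref{StrongApprox}), (\ref{StrongApprox0_1}), and (\ref{StrongApprox0_2}) hold almost surely. Note that these approximations all share the \emph{same} $B_n$ constructed in Theorem~\ref{Th1} for the given pair $(\vecv_n, \vecw_n)$, which is essential since the corollary asserts existence of a single Brownian motion governing both (\ref{StrongApprox3}) and (\ref{StrongApprox0_3}). Evaluating the supremum in (\ref{StrongApprox}) on the grid $\{k/n : 1 \le k \le n\}$ — where $\trunc{nt}/n = k/n$ — yields
\[
  \max_{k \le n} \bigl| \calD_n(k/n; \vecv_n, \vecw_n) - \alpha_n(\vecv_n, \vecw_n)\, B_n(k/n) \bigr| = o(1), \qquad \text{a.s.},
\]
while (\ref{StrongApprox0_1}) provides the analogous bound for $\calD_n^0$ and $B_n^0$.

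Second, I apply the elementary reverse triangle inequality
\[
  \Bigl| \max_{k \le n} |a_k| - \max_{k \le n} |b_k| \Bigr| \le \max_{k \le n} |a_k - b_k|,
\]
with $a_k = \calD_n(k/n; \vecv_n, \vecw_n)$ and $b_k = \alpha_n(\vecv_n, \vecw_n)\, B_n(k/n)$. Since $\alpha_n(\vecv_n, \vecw_n) \ge 0$, one has $\max_{k \le n} |\alpha_n B_n(k/n)| = \alpha_n(\vecv_n, \vecw_n) \max_{k \le n} |B_n(k/n)|$, and the right-hand side is $o(1)$ a.s.\ by the display above; this gives (\ref{StrongApprox3}). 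Repeating the argument with $a_k = \calD_n^0(k/n;\vecv_n,\vecw_n)$, $b_k = \alpha_n(\vecv_n, \vecw_n) B_n^0(k/n)$, and invoking (\ref{StrongApprox0_1}) in place of the first display, yields (\ref{StrongApprox0_3}).

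I do not expect any real obstacle: the substantive work is entirely contained in Theorem~\ref{Th1}, and the corollary is essentially the continuous-mapping consequence of that strong approximation for the functional $f \mapsto \max_{k \le n} |f(k/n)|$. The only point requiring a line of verification is the coupling of the two limit processes ($B_n$ and $B_n^0$) appearing in the two statements of the corollary, which is resolved by the fact that $B_n^0$ is constructed from the same $B_n$ as in Theorem~\ref{Th1}.
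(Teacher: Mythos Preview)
Your proposal is correct and matches the paper's implicit treatment: the paper does not give a separate proof of Corollary~\ref{Corollary1}, treating it as an immediate consequence of Theorem~\ref{Th1}, and your argument via the reverse triangle inequality $\bigl|\max_k |a_k| - \max_k |b_k|\bigr| \le \max_k |a_k - b_k|$ together with $\alpha_n \ge 0$ is precisely the standard way to extract such a statement from the uniform approximations (\ref{StrongApprox}) and (\ref{StrongApprox0_1}).
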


We conjecture that the strong approximation (\ref{StrongApprox0}) holds with the rate $ O(n^{-\lambda}) $, i.e. one can select a universal constant $C$ in (\ref{StrongApprox0}), such that in turn the strong approximations (\ref{StrongApprox})--(\ref{StrongApprox0_3})
would hold with the rate $ O(n^{-\lambda}) $ as well. This is also plausible from the following more general result which provides us with a strong approximation with that rate when weighting over the sample sizes with $ \ell_1 $-bounded weights: Attach to each sample size $n$ a  weight $ \lambda_n $ such that  $ \sum_{n=1}^\infty |\lambda_n| < \infty $. Define
\begin{equation}
\label{GeneralStat}
  D_k( \{ \vecv_n, \vecw_n \} ) =
  \sum_{n,m=1}^\infty \lambda_n \lambda_m \vecv_n' ( \wh{\bfSigma}_{nmk} - \bfSigma_{nmk} ) \vecw_m,
  \qquad k \ge 1,
\end{equation}
where 
\begin{equation}
\label{DefSigmaGeneral}
 \wh{\bfSigma}_{nmk} = \sum_{i \le k} \vecY_{ni} \vecY_{mi}' \quad \text{and} \quad  \bfSigma_{mnk} = E( \wh{\bfSigma}_{nmk} ) 
\end{equation} 
for $ n, m \ge 1 $, based on the full set of observations $ \{ Y_{i}^{(\nu)} : 1 \le \nu < d_n, 1 \le i \le n, n \ge 1 \} $. At this point it is worth recalling that  in model (\ref{SubordModel}) the $d_n$ coordinate processes of $ Y_{ni} $ are well defined for all $ i \ge 1 $. For fixed $k$, $ D_k ( \{ \vecv_n, \vecw_n \} ) $ depends on the array $ \{ Y_{ni}^{(\nu)}, i \le k, 1 \le \nu \le d_n,  n \in \N \} $. Notice that $ D_{nk}( \vecv_n, \vecw_n) $, defined in (\ref{DefDnk}), appears as a special case of (\ref{GeneralStat}). We may define the following \cadlag process associated to (\ref{GeneralStat}) 
\begin{equation}
\label{GeneralCadlag}
  \calD_N(t; \{ \vecv_n, \vecw_n \} ) = N^{-1/2} D_{\trunc{Nt}}( \{ \vecv_n, \vecw_n \} ),
  \qquad t \in [0,1], 
\end{equation}
for some sample size $N$ (which can be equal to $n$).

We have the following general large sample approximation result.

\begin{theorem}
\label{Th2} Let $ \{ \vecv_n \} $ and  $ \{ \vecw_n \} $ be two sequences of weighting vectors with uniformly bounded $ \ell_1 $--norm in the sense of (\ref{l1Condition}). Assume that $ \{ Y_{i}^{(\nu)} : 1 \le \nu < \infty, 1 \le i \le n, n \ge 1 \} $ follows model (\ref{SubordModel}) and satisfies Assumption (A). Let $ \{ \lambda_n \} $ be weights with $ \sum_{n=1}^\infty | \lambda_n | < \infty $. Then there exist  constants $ \alpha( \{ \vecv_n, \vecw_n \} ) $, $ \lambda > 0$ and $ C $ (not depending on the sample size), such that for equivalent versions and a standard Brownian motion $B$ on  $[0,\infty)$, defined on a new probability space,
\begin{equation}
\label{GeneralIP1}
  \left| D_t( \{ \vecv_n, \vecw_n \} ) - \alpha( \{ \vecv_n, \vecw_n \} ) B(t) \right| \le C t^{1/2-\lambda}, 
\end{equation}
a.s., for all $ t > 0 $, leading to the strong invariance principle with rate,
\begin{equation}
\label{GeneralIP2}
  \sup_{t \in [0,1]} | \calD_N(t, \{ \vecv_n, \vecw_n \} )  - \alpha( \{ \vecv_n, \vecw_n \} )  B( \trunc{N t} / N ) | \le C N^{-\lambda},
\end{equation}
a.s., for the $[0,1]$--version of $ B $, where $C$ is the same constant as in (\ref{GeneralIP1}) and may depend on the sequence $ \{ \lambda_n \} $.
\end{theorem}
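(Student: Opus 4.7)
The strategy is to reduce Theorem~\ref{Th2} to a single-sum strong approximation by absorbing the double summation into two scalar linear processes. For $i \ge 1$ set $U_i = \sum_{n \ge 1} \lambda_n \vecv_n' \vecY_{ni}$ and $V_i = \sum_{m \ge 1} \lambda_m \vecw_m' \vecY_{mi}$. Substituting the MA representation (\ref{SubordModel}) and interchanging the (absolutely convergent) sums gives $U_i = \sum_{j \ge 0} a_j \epsilon_{i-j}$, $V_i = \sum_{j \ge 0} b_j \epsilon_{i-j}$ with coefficients $a_j = \sum_{n \ge 1} \lambda_n \sum_{\nu=1}^{d_n} v_{n\nu} c_{nj}^{(\nu)}$ and $b_j$ analogous. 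The uniform $\ell_1$-bounds on the $\vecv_n, \vecw_m$, the summability of $\{\lambda_n\}$ and Assumption (A) jointly imply $|a_j| + |b_j| \le K (j \vee 1)^{-3/4-\theta/2}$ with $K$ depending only on $\sup_n \|\vecv_n\|_{\ell_1}$, $\sup_m \|\vecw_m\|_{\ell_1}$, $\sum_n |\lambda_n|$ and the constants from Assumption (A). Crucially, the sequences $\{a_j\}, \{b_j\}$ do not depend on any sample size, which is the source of the \emph{universal} constant $C$ in (\ref{GeneralIP1}).

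Under this reduction $D_k(\{\vecv_n,\vecw_n\}) = \sum_{i=1}^k (U_i V_i - E(U_iV_i))$, i.e.\ the partial sum of the centred product of two jointly stationary scalar linear processes driven by the common innovation sequence $\{\epsilon_k\}$. I would then run the machinery underlying Theorem~\ref{Th1}: expanding $U_i V_i - E(U_i V_i)$ in the $\epsilon_{i-j}\epsilon_{i-l}$ basis and grouping the resulting terms by the lag $l - j$ leads to a martingale approximation in the spirit of (\ref{ApproxMartingale}) with coefficients $\wt{f}_{0,0} = \sum_{j \ge 0} a_j b_j$ and $\wt{f}_{l,0} = \sum_{j \ge 0} (a_j b_{j+l} + b_j a_{j+l})$ for $l \ge 1$. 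The polynomial decay of $a_j, b_j$ implies the analogues of (\ref{Ass1})--(\ref{Ass4}) from Lemma~\ref{TechnicalLemma} but now with a single fixed constant (no $\sup_n$ is needed because the coefficients are sample-size-free), yielding a well-defined $\alpha(\{\vecv_n,\vecw_n\})^2 \ge 0$ together with an $L^2$-rate on the difference between the partial sum and the martingale.

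With the martingale approximation in hand, I would invoke the Hilbert-space strong invariance principle of \cite{Philipp1986} applied to this scalar martingale to obtain, on a new probability space, a standard Brownian motion $B$ such that $|M_t - \alpha(\{\vecv_n,\vecw_n\}) B(t)| \le C t^{1/2-\lambda'}$ for all $t > 0$ a.s., for some $\lambda' > 0$. Absorbing the martingale approximation error, which is of smaller polynomial order, gives (\ref{GeneralIP1}). The assertion (\ref{GeneralIP2}) then follows by the rescaling $t \mapsto Nt$, which turns the bound $C t^{1/2-\lambda}$ into $C N^{-\lambda}$ uniformly over $t \in [0,1]$, exactly as in the passage from (\ref{StrongApprox0}) to (\ref{StrongApprox}).

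The main obstacle, I expect, is the bookkeeping for the nested summations: verifying Fubini-type interchanges $L^2$-a.s., checking that $\{a_j\}, \{b_j\}$ inherit the polynomial decay required to drive Lemma~\ref{TechnicalLemma}, and showing that the truncated martingale proxy for $\alpha(\{\vecv_n,\vecw_n\})^2$ stabilises at the rate needed to apply \cite{Philipp1986} with an explicit $\lambda > 0$. Once the reduction to a fixed scalar linear-process setting is complete, the strong approximation itself is essentially a scalar specialisation of Theorem~\ref{Th1}, and the constant $C$ comes out as an explicit polynomial in $\sup_n \|\vecv_n\|_{\ell_1}$, $\sup_m \|\vecw_m\|_{\ell_1}$, $\sum_n |\lambda_n|$ and the constants provided by Assumption (A).
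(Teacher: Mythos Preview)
Your proposal is correct and follows essentially the same route as the paper: the key reduction is to define the aggregated linear processes $U_i=\sum_{n}\lambda_n\vecv_n'\vecY_{ni}$ and $V_i=\sum_{m}\lambda_m\vecw_m'\vecY_{mi}$ (the paper's $Y_i(\{\vecv_n\})$ and $Y_i(\{\vecw_n\})$), observe that their MA coefficients $a_j,b_j$ are sample-size-free and inherit the decay of Assumption~(A) via the $\ell_1$-bounds and $\sum_n|\lambda_n|<\infty$, and then rerun the Kouritzin/Philipp machinery from the proof of Theorem~\ref{Th1}. The paper's proof is terser---it records only the representation and the decay estimate for $(c_j^{(v)})^2$ and then says ``Hence the result''---but your added detail on the martingale approximation and the $\wt{f}_{l,0}$ coefficients is exactly what that phrase unpacks.
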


It remains to discuss how one may estimate the asymptotic variance parameters $ \alpha^2 = \alpha^2( \vecv, \vecw ) $ and
$ \beta^2 = \beta^2( \vecv_r, \vecw_r, \vecv_s, \vecw_s ) $ arising in the strong approximations. 
For a sequence of lag truncation constants $m = m_n $, $n \ge 1 $, and weights $ \{ w_{mh}  : h \in \Z, m \in \N \} $ define the
estimator
\begin{equation} 
\label{LRVEst} 
  \wh{\alpha}_n^2 = \wh{\alpha}_n^2(d) = \wh{\Gamma}_n(0) + 2 \sum_{h=1}^{m} w_{mh} \wh{\Gamma}_n( h ),
\end{equation} 
where
\[ 
   \wh{\Gamma}_n( h ) 
   = \frac{1}{n} \sum_{i=1}^{n-h} 
   [ Y_i^{(v)} Y_i^{(w)} - \wh{\mu}_n^{(v,w)} ]
   [ Y_{i+|h|}^{(v)} Y_{i+|h|}^{(w)} - \wh{\mu}_n^{(v,w)} ],
   \qquad |h|<n,
\]
with $ \wh{\mu}_n^{(v,w)} = n^{-1} \sum_{j=1}^n Y_j^{(v)} Y_j^{(w)} $ 
and $ Y_i^{(v)} = \vecv' \vecY_i = \sum_{j=0}^\infty c_j^{(v)} \epsilon_{i-j} $, $ i \ge 1 $, with weights $ c_j^{(v)} = \left( \sum_{\nu=1}^d c_j^{(\nu)} v_j \right) $, $ j \ge 0 $. At this point
we consider the dimension as a further parameter, such that $
Y_i^{(v)} $, $ \wh{\Gamma}_h = \wh{\Gamma}_h(d) $ and $ \alpha^2 = \alpha^2(d) $ depend on the dimension $ d $. 

More generally, the covariance parameters $ \beta_{rs} = \beta( \vecv_r, \vecw_r; \vecv_s, \vecw_s ) $, $ 1 \le r, s \le K $, arising in Theorem~\ref{ThK} can be estimated by
\begin{equation} 
\label{LRVEst} 
  \wh{\beta}^2(r,s) 
  = \wh{\beta}^2(r,s;d) 
  = \wh{\Gamma}_{n}^{(r,s)}(0) + 2 \sum_{h=1}^{m} w_{mh} \wh{\Gamma}_n^{(r,s)}( h ),
\end{equation} 
where $ m = m_n $ is a lag truncation sequence and
\[
  \wh{\Gamma}_n^{(r,s)}( h ) = \frac1n 
    \sum_{k=1}^{n-h} 
    [ Y_k( \vecv_r ) Y_k( \vecw_r ) - \wh{\mu}_n(r) ]
    [ Y_{k+|h|}( \vecv_s ) Y_{k+|h|}( \vecw_s ) - \wh{\mu}_n(s) ],
    \qquad |h| < n,
\]
with $ \wh{\mu}_n(r) = n^{-1} \sum_{j=1}^n Y_j( \vecv_r ) Y_j( \vecw_r ) $, for $ 1 \le r, s \le K $.

For the weights we impose the following standard assumptions.
\begin{itemize}
  \item[(W1)] $ w_{mh} \to 1 $, as $ m \to \infty $, for all $ h \in \Z $.
  \item[(W2)] $ 0 \le w_{mh} \le W < \infty $ for some constant $W$, for all $ m \ge 1 $, $ h \in \Z $.
\end{itemize}
Typically, the weights are defined by a kernel function $w$ via
$ w_{mh} = w(h/b_m) $ for some bandwidth parameter $ b = b_m $. Examples are the truncated kernel, $ k_{tr}(x) = \eins( |x| \le 1) $ with $ b = m+1 $, or Bartlett's estimator given by the triangular weight function $ w(x) = (1 - x) \eins( |x| \le 1 ) $, see \cite{NeweyWest1987} and \cite{Andrews1991} amongst others. 

\cite{Jirak2012} has studied estimation of long run variance parameters associated to $ d \to \infty $ nonlinear time series, based on results of \cite{Wu2009}. Those results are not applicable to our general setting,
since the coefficients of the linear processes $ Y_i^{(v)} $ depend on $n$ if $ d = d_n $.
The following theorem provides the $ L_1 $--consistency, uniformly
over $d$ and a possibly infinite number of weighting vectors, as long as they are taken from an $ \ell_1 $--bounded set. This yields the consistency of $ \wh{\alpha}_n^2(d_n) $ for a growing dimension $ d_n \to \infty $, since
\[
  E | \wh{\alpha}_n^2(d_n) - \alpha^2(d_n) | 
  \le \sup_{d \in \N} E | \wh{\alpha}^2_n(d) - \alpha^2(d) |,
\]
without a constraint on the growth of $ d_n $, thus going beyond the known results.

\begin{theorem}
\label{ThLRVEst}
Assume (W1) and (W2) and suppose that $ c_{nj}^{(\nu)} = c_j^{(\nu)} $ for $ \nu = 1, 2, \dots $, $ j \ge  0,  n \ge 1, $ satisfy the decay condition
\begin{equation}
  \label{DecayCoeff}
    \sup_{1 \le \nu} | c_j^{(\nu)} | << (j \vee 1)^{-(1+\delta)}
\end{equation}
for some $ \delta > 0 $.  Suppose that $ \epsilon_k $ are i.i.d. with $ \max_k E | \epsilon_k |^{8} < \infty $ and assume that $ m = m_n \to \infty $ with $ m^2/n = o(1) $, as $ n \to \infty $. If $ \vecv, \vecw $ are weighting vectors with $ \| \vecv \|_{\ell_1}, \| \vecw \|_{\ell_1} < \infty $, then
\begin{equation}
 \label{Consistency1}
   \sup_{d \in \N} E | \wh{\alpha}_n^2(d) - \alpha^2(d) | \to 0.
 \end{equation}
 Further, if  $ \vecv_\ell, \vecw_\ell $, $ \ell \ge 1 $, are weighting vectors
 with uniformly bounded $ \ell_1 $--norm, i.e.
  \[
  \sup_{1 \le \ell} \max \{ \| \vecv_\ell \|_{\ell_1},  \| \vecw_\ell \|_{\ell_1} \} \le C_{v,w} < \infty,
\]
for some constant $ C_{v,w} $, then
 \begin{equation}
 \label{Consistency2}
    \sup_{1 \le r, s} \sup_{d \in \N} E | \wh{\beta}_n^2(r,s;d) - \beta^2(r,s;d) | \to 0,
 \end{equation}
 as $ n \to \infty $.
\end{theorem}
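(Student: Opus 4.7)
My plan is to use the classical bias-variance decomposition
\[
E|\wh{\alpha}_n^2(d) - \alpha^2(d)| \le |E\wh{\alpha}_n^2(d) - \alpha^2(d)| + \sqrt{\Var(\wh{\alpha}_n^2(d))},
\]
and to bound each summand uniformly in $d \in \N$ along $\ell_1$-bounded families of weighting vectors. The starting point is that, because in this theorem the coefficients $c_j^{(\nu)}$ do not depend on $n$, the scalar process
\[
X_i = Y_i^{(v)} Y_i^{(w)} = \Big(\sum_{j \ge 0} c_j^{(v)} \epsilon_{i-j}\Big)\Big(\sum_{j \ge 0} c_j^{(w)} \epsilon_{i-j}\Big), \qquad i \ge 1,
\]
is strictly stationary, with $c_j^{(v)} = \sum_\nu v_\nu c_j^{(\nu)}$ and $c_j^{(w)}$ defined analogously. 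From (\ref{DecayCoeff}) and the triangle inequality I get the uniform majorisation $|c_j^{(v)}| \le C\|\vecv\|_{\ell_1}(j \vee 1)^{-(1+\delta)}$, and likewise for $c_j^{(w)}$. This is the workhorse estimate, since it dominates every convolution of the $c^{(v)}$'s and $c^{(w)}$'s that will appear by a product of $\ell_1$-norms times a lag-summable quantity, with no residual dependence on $d$.

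For the bias, I would apply the Isserlis/fourth-cumulant expansion for products of linear functionals of i.i.d.\ innovations to write
\[
\Gamma(h) := \Cov(X_i, X_{i+h}) = \sigma^4\sum_j c_j^{(v)} c_{j+h}^{(v)}\sum_j c_j^{(w)} c_{j+h}^{(w)} + \sigma^4\sum_j c_j^{(v)} c_{j+h}^{(w)}\sum_j c_j^{(w)} c_{j+h}^{(v)} + \kappa_4\sum_j c_j^{(v)} c_j^{(w)} c_{j+h}^{(v)} c_{j+h}^{(w)},
\]
with $\kappa_4 = E \epsilon_1^4 - 3\sigma^4$. The coefficient bound then yields $|\Gamma(h)| \le K \|\vecv\|_{\ell_1}^2 \|\vecw\|_{\ell_1}^2 (h \vee 1)^{-(1+\delta)}$, so $\{\Gamma(h)\}$ is summable uniformly in $d$ and in the $\ell_1$-bounded family; together with Remark~\ref{TheRemark} and the identity $\calD_n(1;\vecv,\vecw) = \sqrt{n}(\bar{X}_n - \mu^{(v,w)})$, this identifies $\alpha^2(d) = \sum_{h \in \Z}\Gamma(h)$. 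Absorbing the centering $\wh{\mu}_n^{(v,w)} - \mu^{(v,w)} = O_{L^2}(n^{-1/2})$ at an $O(1/n)$ cost uniform in $h$ gives the standard expansion $E\wh{\Gamma}_n(h) = (1 - |h|/n)\Gamma(h) + O(1/n)$, hence
\[
|E\wh{\alpha}_n^2(d) - \alpha^2(d)| \le 2\sum_{h=1}^m (1 - w_{mh}) |\Gamma(h)| + 2\sum_{h > m} |\Gamma(h)| + O(m/n),
\]
and each of the three terms vanishes uniformly: the first by (W1), (W2) and dominated convergence against the $d$-free envelope for $|\Gamma(h)|$; the second by summability and $m \to \infty$; the third since $m^2/n \to 0$ implies $m/n \to 0$.

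For the variance, the assumption $E|\epsilon|^8 < \infty$ allows me to expand $\Cov(\wh{\Gamma}_n(h), \wh{\Gamma}_n(h'))$ via the standard formula for covariances of sample autocovariances, splitting it into products of $\Gamma$'s and into fourth-order cumulants of the $X_i$'s; for linear processes with i.i.d.\ innovations those cumulants reduce to convolution sums in $c_j^{(v)}, c_j^{(w)}$ that again factor through the $\ell_1$-norms and are summable in all lag gaps by the $(j \vee 1)^{-(1+\delta)}$ decay. A routine but tedious bookkeeping then delivers $|\Cov(\wh{\Gamma}_n(h), \wh{\Gamma}_n(h'))| \le K'/n$ uniformly for $0 \le h, h' \le m$ and in $d$, so that by (W2)
\[
\Var(\wh{\alpha}_n^2(d)) \le 4 W^2 \sum_{h, h' = 0}^m |\Cov(\wh{\Gamma}_n(h), \wh{\Gamma}_n(h'))| \le 4 W^2 K' m^2/n = o(1),
\]
closing the proof of (\ref{Consistency1}). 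Assertion (\ref{Consistency2}) follows from exactly the same argument applied to the cross-covariance $\Gamma_{r,s}(h) = \Cov(X_i^{(r)}, X_{i+h}^{(s)})$ between $X_i^{(r)} = Y_i^{(v_r)} Y_i^{(w_r)}$ and $X_i^{(s)} = Y_i^{(v_s)} Y_i^{(w_s)}$; the uniformity over $r,s \in \N$ is automatic because every constant depends on the projection vectors only through the single bound $C_{v,w}$. The main obstacle, as anticipated, is this bookkeeping through the Isserlis and fourth-cumulant expansions: I must check at each step that no factor of $d$, $\|\vecv\|_{\ell_2}$ or $\|\vecv\|_{\ell_\infty}$ leaks into the constants, so that all estimates are genuinely dimension-free and the kernel factors from (W2) absorb cleanly into $W$.
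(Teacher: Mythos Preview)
Your proposal is correct and follows the same high-level architecture as the paper: identify $\alpha^2(d)=\sum_h\Gamma(h)$ via the linear-process structure, prove the uniform summability $\sup_d\sum_h|\Gamma(h)|<\infty$ from the coefficient decay (\ref{DecayCoeff}), separate the bias terms (tail of $\Gamma$, kernel bias $(1-w_{mh})\Gamma(h)$, finite-sample bias $O(m/n)$), and bound the stochastic fluctuation of the $\wh{\Gamma}_n(h)$'s by something of order $m/\sqrt{n}$. The genuine difference lies in how you control that stochastic term. The paper does \emph{not} carry out the eighth-moment Isserlis bookkeeping; instead it invokes Wu's physical dependence framework, showing $\sum_k\delta_8(\{Y_i^{(v)}\},k)<\infty$, hence $\sum_k\delta_4(\{\xi_i\},k)<\infty$ and $\sum_k\delta_2(\{\xi_i(r)\xi_{i+h}(s)\},k)<\infty$, and then applies \cite[Th.~1]{Wu2007} to get $\|\wt{\Gamma}_h-E\wt{\Gamma}_h\|_{L_2}\le C n^{-1/2}$ directly. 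This is cleaner, avoids all cumulant expansions beyond order four, and would extend verbatim to nonlinear Bernoulli shifts. Your route via direct $\Cov(\wh{\Gamma}_n(h),\wh{\Gamma}_n(h'))$ computation is more elementary and self-contained for the linear case at hand, but the ``routine but tedious bookkeeping'' you allude to is genuinely tedious (eighth-order joint moments of four linear forms), and you should be aware that the paper sidesteps it entirely.
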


Suppose that the dimension is a random variable, $ D $, and drawn according to some (prior) probability measure $ Q $ on $ \N $, such that $D$ and $ \{ \vecY_{ni} \} $ are independent, and statistical inference is conducted given $ D = d$. A natural measure to evaluate the estimator $ \wh{\alpha}_n^2(D) $ in this setting is the expected mean deviation,
\[
  EMD(\wh{\alpha}_n^2(D)) = \int E | \wh{\alpha}_n^2(d) - \alpha^2(d) | \, dQ(d).
\]
Theorem~\ref{ThLRVEst} readily implies 
  $
    EMD( \wh{\alpha}_n^2(D) ) \to 0,
 $  as $ n \to \infty $.

\section{Applications}
\label{Applications}

The results of the present paper have direct applications to several problems and procedures, respectively, which are extensively studied for high--dimensional time series, especially for big data. They contribute novel large-sample approximations for making inference based on the corresponding statistics, usually projections.

\subsection{Optimal portfolio selection}
\label{Subsec: portfolio}

The problem of optimal portfolio selection, dating back to Markowitz' seminal work, see \cite{Markowitz1952}, is an intrinsically high-dimensional problem. We are given a usually large number $d_n$ of assets and associated returns  $ \vecR_n = (R_n^{(1)}, \dots, R_n^{(d_n)})' $ corresponding to the time period $ [n-1,n] $ with mean vector $ \bfmu $ and covariance matrix $ \bfSigma = ( \sigma_{ij} )_{ij} $. Let us assume that the asset return vector time series satisfies the standing assumptions of the paper. Since $ \sigma_{ij} $ is the covariance between
the return of asset $i$ and asset $j$, $ 1 \le i, j, \le d_n $, it is not restrictive, by the very nature of the problem, to assume that the entries
of $ \bfSigma $ neither depend on $ n $ nor $d_n $. 
Suppose that an investor holds at time $n-1$ the position $ w_{nj} $ in asset $ j $, where $ w_{nj} > 0 $ represents a long position and $ w_{nj} < 0 $ a short position. W.l.o.g. we may assume that the initial value (capital) at time $ n-1 $ equals $ V = \sum_{j=1}^{d_n} w_{nj} = 1 $, such that the value  at time
instant $n$ is $ \vecw_n'\vecR_n $.
A classical formulation of the portfolio optimization problem is to minimize the risk, defined as the variance, associated to the portfolio return $ \vecw_n'\vecR_n $ at time $n$, i.e. to consider the problem
\[
  \min_{\vecw_n} \Var( \vecw_n' \vecR_n ) = \vecw_n' \bfSigma \vecw_n,
  \qquad \text{subject to $ \vecw_n' \eins = 1 $},
\]
whose solution is known to be 
$
  \vecw_n = ( \eins' \Sigma^{-1} \eins )^{-1} \eins' \Sigma^{-1}.
$
Here $ \eins $ is the $n$-vector with unit entries. Obviously, if that optimal solution satisfies the no-short-sales
condition $ \vecw_n \ge 0 $, then
$ 
  \|\vecw_n \|_{\ell_1} = \vecw_n'\eins = ( \eins' \Sigma^{-1} \eins )^{-1} \eins' \Sigma^{-1} \eins = 1,
$
such that the optimal portfolio has uniformly bounded $ \ell_1 $-norm. The mean-variance formulation
adds a constraint on the  target mean portfolio return and thus considers the problem
\[
  \min_{\vecw_n} \vecw_n' \bfSigma \vecw_n, \qquad \text{subject to $ \vecw_n'\eins = 1, \ \vecw_n'\mu = \mu_0 $}
\]
for  some $ \mu_0 $. The solution is
\[
  \vecw_n = \frac{c - \mu_0 b}{ac - b^2} \bfSigma^{-1} \eins +
    \frac{ \mu_0 a - b}{ ac - b^2} \bfSigma^{-1} \bfmu.
\]
with $ a = \eins' \bfSigma^{-1} \eins $, $ b = \eins' \bfSigma^{-1} \bfmu $ and $ c = \bfmu' \bfSigma^{-1} \bfmu $. Based on estimates of $ \bfSigma^{-1} $ and $ \bfmu $ from past data, one calculates the optimal portfolio which is then held until the next rebalancing. If the dimension is large, inverting the sample covariance matrix may result in substantial numerical instability and becomes impossible if the dimension is larger than the sample size. Shrinking is a commonly applied approach for regularization, in order to obtain a stable and invertible estimator, see \cite{LedoitWolf2003}, amongst others. (We also refer to our subsection 5.3 for more on this.) It is worth mentioning that adding a non-negativity constraint (i.e. no-short-sales)  has been observed to have a similar regularizing effect, see \cite{Jagannathan2003}. 

To obtain sparse portfolios,  \cite{BrodieEtAl2009} proposed to add an $ \ell_1$-constraint. To discuss their
approach, let us assume that we are given an additional independent learning sample 
$ ( r_t^{(j)} )_{t,j} $, $ 1 \le j \le d_N $, $ 1 \le t \le N $, of size $N$ of returns
for the same assets, such that in particular $ d_N = d_n $, which is used to
estimate the optimal weighting vector. Markowitz' problem is equivalent to 
\[
  \min_{\vecw_n} E ( | \mu_0 - \vecw_n'\vecR_n |^2 ), \qquad \text{subject to $ \vecw_n'\bfmu = \mu_0, \ \vecw_n'\eins = 1$},
\]
which suggests the following empirical version 
\begin{equation}
\label{BrodieUnregularized}
  \min_{\vecw_n} n^{-1} \| \mu_0 \eins - \calX_N \vecw_n \|_{\ell_2}^2, \qquad \text{subject to 
$  \vecw_n'\wh{\bfmu}_N = \mu_0, \ \vecw_n'\eins = 1$,}
\end{equation}
where $ \calX_{N} = ( r_t^{(j)} )_{t,j} $ is the $N \times d_n$ dimensional data matrix of returns with rows $ \vecr_t' = ( r_t^{(1)}, \dots, r_t^{(d_n)}  ) $, $ t = 1, \dots, N $, and $ \wh{\bfmu}_N = N^{-1} \sum_{t=1}^N \vecr_t $,
see \cite[formula 1]{BrodieEtAl2009}. These authors examine the $\ell_1$-regularized version of (\ref{BrodieUnregularized}), 
\[
  \min_{\vecw_n} n^{-1} \| \mu_0 \eins - \calX_N \vecw_n \|_{\ell_2}^2 + \rho \| \vecw_n \|_{\ell_1}, \quad
  \text{subject to $ \vecw_n'\wh{\bfmu}_n = \mu_0, \ \vecw_n'\eins = 1 $},
\]
for some regularization parameter $ \rho > 0$.
Whereas for large values of $ \rho $ the classical solution is recovered, smaller values lead to effective $ \ell_1 $-penalization and sparse portfolio vectors with only relatively few active positions.

\subsection{Projections onto lower-dimensional subspaces, sparse principal components and the LASSO}
\label{Subsec: sparsePCA}

A primary goal of multivariate statistical analysis and an indispensable tool to investigate big data is to project high--dimensional data onto lower-dimensional subspaces. Therefore the results of this paper directly address various approaches that are based on $ \ell_1 $ projection vectors to ensure sparse representations of the high--dimensional data. In particular, the results apply to the recently proposed methods of \cite{JollifeEtAl2003}, \cite{ShenHuan2008}, \cite{TibshiraniWittenHastie2009} for sparse principal component analysis and the LASSO, see \cite{Tibshirani1996}.

Recall that for $ L $ vectors $ \vecw_n^{(k)} $, $ k = 1, \dots, L $,  of dimension $ d_n $ we may define the linear mapping $ \pi_n : \R^{d_n} \to \R^{d_n} $,
\[
  \pi_n = \matP_n \matP_n' \vecY_n, \qquad \matP_n = [ \vecw_n^{(1)}, \dots, \vecw_n^{(L)} ],
\]
onto the associated linear subspace $ \text{span} \{ \vecw_n^{(k)}  : k = 1, \dots L \} $,
which is the orthogonal projection if the $ \vecw_n^{(k)}  $ are orthonormal.
Otherwise, the latter is given by  $ \matP_n(\matP_n'\matP_n)^- \matP_n' $, where $ \matA^- $ denotes a generalized inverse of a square matrix $ \matA $. 
In both cases, inferential procedures for $ \pi_n $ can be based on the asymptotics of the {\em dimension-reducing statistic} $ \matP_n' \vecY_n $, such that our results apply, provided the projection vectors in use and the vector time series satisfy our assumptions. However,
in general, the columns of $ \matP_n' $ and $ \matP_n(\matP_n'\matP_n)^- \matP_n' $, respectively, do not necessarily satisfy the uniform $ \ell_1 $--condition.

One may apply the following ad hoc approach to transform each vector $ \vecw_n^{(j)} $ such that it satisfies an $ \ell_1 $-constraint 
$
  \| \vecw_n^{(j)} \|_{\ell_1} \le c $, $ j = 1, \dots, L,
$
for some preassigned constant $ c $, by calculating the $ \ell_1 $-constrained optimal projection of $ \vecw_n^{(j)} $ onto $ \text{span}\{ \vecw_n^{(j)} \} $, 
\[
  \max_{\vecu} \vecu' \vecw_n^{(j)} \qquad \text{subject to $ \| \vecu \|_{\ell_2}^2 \le 1, \| \vecu \|_{\ell_1} \le c $},
\]
whose solution is known to be
\[
  \wt{\vecw}_n^{(j)} = S( \vecw_n^{(j)}, \delta ) / \| S( \vecw_n^{(j)}, \delta ) \|_{\ell_2}.
\]
Here
$
  S( a, \delta ) = \text{sgn}(a)(|a|  - \delta)_+
$
is the {\em soft-thresholding function}, $ x_+ = x $ if $ x > 0 $ and $ =0 $ otherwise, and $ \delta  \ge 0 $ is chosen such that $ \| S( \vecw_n^{(j)}, \delta ) \|_{\ell_1} = c $, see  \cite[Lemma~2.2]{TibshiraniWittenHastie2009} and \cite{ShenHuan2008}.

Imposing $ \ell_1 $-constraints is also the basic idea behind most approaches to
define a sparse principal component analysis for high-dimensional data, which  aims at determining
lower-dimensional subspaces generated by $ \ell_1 $-vectors in such a way that they
explain a large part of the variation in the observed data and also provide 
low-rank approximation of the data matrix. 
Let $ \calX_n $ be a $n \times d_n $-dimensional data matrix with centered columns corresponding to an independent sample of size $n$ of the variables $ Y^{(1)}, \dots, Y^{(d_n)} $, whose columns are assumed to be centered. The simplified component technique-lasso (SCoTLASS) approach of \cite{JollifeEtAl2003} defines the first sparse principal component as a solution of the optimization problem
\[
  \max_{\vecv} \vecv' \calX_n'\calX_n \vecv, \qquad \text{subject to $ \| \vecv \|_{\ell_2}^2 \le 1 $, $ \| \vecv \|_{\ell_1} \le c$.}
\]
Further sparse components are obtained by maximizing the same objective function under above constraints and the additional constraints that the further component is orthogonal to the previous components. In this way, after $L$ steps we obtain $L$ orthogonal $\ell_1$-vectors.

In a similar way, \cite{TibshiraniWittenHastie2009} propose a sparse principal component analysis by solving, for the first component, the penalized matrix decomposition problem (PMD) with $ \ell_1$-constraints,
\[
  \max_{\vecu, \vecv} \vecu' \calX_n'\calX \vecv, \qquad \text{subject to $ \| \vecv \|_{\ell_1} \le c $, 
  $\| \vecu \|_{\ell_2}^2 \le 1, \| \vecv \|_{\ell_2}^2 \le 1$},
\]
Observing that for fixed $ \vecv $ the solution is given by $ \vecu = \calX_n  \vecv / \| \calX_n \vecv \|_{\ell_2} $, the first sparse principal component of PMD with $ \ell_1 $-constraints also solves SCoTLASS, see \cite[p.~525]{TibshiraniWittenHastie2009}. However, the PMD approach does not constrain the further components to be orthogonal, so that it differs from SCoTLASS.
Closely related is the sparse PCA (SPCA) method of \cite{ShenHuan2008}. They consider the problem to determine a regularized low-rank matrix approximation,
\[
  \min_{\vecu, \vecv} \| \calX - \vecu \vecv' \|_F^2 + p_\rho(\vecv),
  \qquad \| \vecu \|_{\ell_2} = 1,
\]
for several penalty terms $ p_\rho(\vecv) $ including the case $ \rho \| \vecv \|_{\ell_1} $, for some $ \rho > 0 $.

The LASSO, see \cite{Tibshirani1996} and \cite{Tibshirani2011}, is a well established approach to determine $ \ell_1 $--sparse coefficient vectors in a high--dimensional linear regression model
\[
  Y_t = \vecX_t' \bfbeta_0 + \varepsilon_t,  \qquad E(\varepsilon_t|\vecX_t) = 0, 
  \qquad t = 1, \dots, n,
\]
where the conditional expectation $E(Y_t|\vecX_t) = \vecX_t'\bfbeta_0 $ is the $ L_2 $--optimal predictor for $ Y_t $ given $ \vecX_t $. Given some estimator $ \wh{\bfbeta}_n $ of the unknown coefficient vector $ \bfbeta_0 \in R^d $, the linear projection $ \pi_n(\vecX) = \vecX' \wh{\bfbeta}_n $ is used to predict the outcome of the response for some (future) observed $ \vecX $. The LASSO minimizes the $ \ell_1 $--constrained least squares criterion,
\[
  \bfbeta \mapsto \sum_{t=1}^n (Y_t - \vecX_t'\bfbeta)^2, \qquad \| \bfbeta \|_{\ell_1} \le c,  
\]
for some bound $ c > 0 $ for the $ \ell_1 $--norm, such that the resulting estimator $ \wh{\beta}_n $ is $ \ell_1 $--sparse. Consequently, our results can be applied to draw inference on the variance of the LASSO-based prediction $ \pi_n( \vecX ) = \vecX' \wh{\bfbeta}_n $, provided the regressor vector time series satisfies the assumptions of this article. Among the diverse applications where
the prediction of the response in the presence of a large number of correlated explanatory
variables is of interest, is the analysis of genetic association studies, see \cite{AllisonEtAl2006} for its basic analysis and \cite{BiedermannEtAl2006} for
nonparametric tests and further discussion. In such studies the
regressors are gene expression data and the response is a phenotype. A sparse coefficient vector with only a few nonvanishing entries may allow to identify (groups of) genes which are  associated to the phenotype.

\subsection{Shrinkage estimation}


In many applications, from a statistical point of view, when estimating the common
variance-covariance matrix $ \bfSigma_n $ of a stationary vector time series
$ \vecY_{n1}, \dots, \vecY_{nn} $ of dimension $ d_n $,
one has an interest to regularise $ \wh{\bfSigma}_n $ to improve its (finite-sample) properties such as its mean-squared error $E[\| \wh\bfSigma - \bfSigma_n\|^2_F]$ or its condition number, defined to be the ratio of its largest to its smallest eigenvalue. This is of particular interest if one needs an invertible estimator of  $ \bfSigma_n $. One well-established possibility to  regularise $ \wh{\bfSigma}_n $ (\cite{LedoitWolf2004}, \cite{Sancetta2008}) 
is to consider a shrinkage estimator defined by a linear (in fact a convex) combination of  $ \wh{\bfSigma}_n $ with a well-conditioned "target". Already in the population, for situations where the dimensionality $d_n$ is in the order magnitude of the sample size $n$, shrinkage of the high-dimensional variance-covariance matrix $\bfSigma_n$ towards a target, similarly to ridge regression, can reduce a potentially large condition number. This is achieved by reducing the dispersion of the eigenvalues of $ \bfSigma_n $ around its "grand mean"  $\mu_n := d_n^{-1}\trace{\bfSigma}_n$: large eigenvalues are pulled down towards $\mu_n $, small eigenvalues are lifted up to $\mu_n$. 
Improvement of the mean-squared error $E[\| \wh\bfSigma - \bfSigma_n\|^2_F]$ is achieved via a potentially tremendous variance reduction (due to stabilisation via regularisation), even if obviously a bias is introduced by adding a deliberately misspecified shrinkage target of low complexity (but high regularity) which usually underfits the true underlying variance-covariance matrix. 

A comparatively straightforward, but in practice often well working, choice of the target is a multiple of the $d_n-$dimensional identity matrix $\matid_n $ (\cite{LedoitWolf2004}, \cite{FiecasFrankeSachs2014}). Other choices consist in specifying, e.g. in a context of an economic time series panel, a given or a latent factor which describes the "mean-behaviour" of the panel well in terms of a low-dimensional and hence very stable approximation to the high-dimensional panel structure (\cite{LedoitWolf2003}, \cite{BoehmSachs2008}). Similarly, adding a parametric estimator of small complexity to the fully nonparametric sample estimator as in \cite{FiecasSachs2013}, follows the same  aforementioned paradigm of reducing variance by adding a (model) bias.

The success of this approach, quite naturally, lies in the correct specification of the shrinkage weight $W_n$, the proportion with which the shrinkage target enters into the convex combination: obviously it has to be the higher the less regular the given variance-covariance matrix. More specifically, in the above mentioned literature, a theory of optimal choice of $W_n$ has been delivered, for various scenario, by minimising the mean squared error between the shrunken estimator $ \bfSigma_n^s$ and the true variance-covariance matrix $ \bfSigma_n$. Hence, let
\[
  \bfSigma_n^s =  \bfSigma_n^s(W_n) = (1-W_n) \wh{\bfSigma}_n\ +\ W_n\ \mu_n \matid_n,
\]
which shrinks the sample covariance matrix towards the shrinkage target $ \mu_n \matid_n $.
In the population, the optimal shrinkage weight is derived as
\[
W_n^* = \text{argmin}_{W_n \in [0,1]} d_n^{-1} E[\| \bfSigma_n^s(W_n) - \bfSigma_n\|^2_F] \ ,
\]
leading to the MSE-optimally shrunken matrix $\bfSigma_n^* =  \bfSigma_n^s (W_n^*) $.
A closed form solution can be derived as
\[
W^*_n = E[\| \wh\bfSigma - \bfSigma_n\|^2_F] / E[\| \mu_n \matid_n - \wh\bfSigma_n\|^2_F ]\ , 
\]
where one observes the trade-off between the distance of $\wh\bfSigma_n$ towards $ \bfSigma_n$ (being large in case of a badly conditioned sample covariance matrix) and the distance of the sample estimator to the shrinkage target. 
This choice leads to a true improvement on the level of the mean-squared error:
\[
E[\| \bfSigma_n^* - \bfSigma_n \|^2_F] < E[\| \wh{\bfSigma}_n - \bfSigma_n \|^2_F] \ .
\]

It is obvious that our distributional results of Section 4 can be directly applied to the shrunken matrix  $\bfSigma_n^s$, provided the stationary vector time series 
$ \vecY_{n1}, \dots, \vecY_{nn} $  satisfying (\ref{SubordModel}) and Assumption A, this being the first step in the direction of some inference theory for this kind of shrinkage estimators (which is still lacking in the literature).
In practice, the population quantities $ \mu_n$ and $W^*_n $ need to be replaced by some estimators. In the situation of disposing of independent copies of the sampled data, a possibility is to use those, quite analogously to Section 5.2 and many other "statistical learning situations", in order to construct these estimators $  \wh{\mu}_n $ and $\wh{W}_n$. Then, the discussed results on inference on $\bfSigma_n^*$ continue to hold (conditionally on the "learning sample"). 

\subsection{Change-point analysis}
\label{Subsec: change-points}

Change-point analysis is concerned with the detection and analysis of possible structural changes in the distribution of observations and the determination of the time points of their occurence called change-points. For general methodological overviews we refer to \cite{CsoergoeHorvath1997}, \cite{Steland2012}, the recent review \cite{HuskovaHlavka2012} and the references given in these papers, amongst others. In view of Theorem~\ref{Th1} and Corollary~\ref{Corollary1}, we are in a position to study  an a posteriori (off-line) change-in-variance problem for the variance of a projection $ \vecw_n' \vecY_n $ due to a change in the covariance structure. Off-line procedures are conducted after having observed the full sample and aim at testing for the presence
of a change-point within the sample. In case that such a change-point test rejects the null hypothesis of no change, one is interested in estimating the location of the change-point as well. A test for the change in the covariance matrix has been also proposed by \cite{Jaruskova2013}, but only for independent Gaussian random vectors. For a fixed number of time series see \cite{AueHoermannHorvath2009}.

Suppose that under the null hypothesis of no change $ \vecY_{n1}, \dots, \vecY_{nn} $ forms a $ d_n $--dimensional mean zero stationary vector time series with variance-covariance matrix $ \bfSigma_n^{(0)} $ and satisfying the assumptions of Corollary~\ref{Corollary1}. Our change-point model is formulated in terms of the
sequence of variances of the projections which are determined by the variance-covariance matrices
\[
  \bfSigma_n[i] = \Cov( \vecY_{ni} ), \qquad 1 \le i \le n.
\]
Under the change-point alternative hypothesis we assume that this
sequence is equal to $ \bfSigma_n^{(0)} $ up to the change-point $ q \in \{ 1, \dots, n-1 \} $ and changes for $ i > q $ in such a way that for appropriately selected $ \vecw_n $ satisfying the uniform $ \ell_1 $--condition (\ref{l1Condition}),
\[
  \sigma_n^2(i) = \Var( \vecw_n' \vecY_{ni} ) = \vecw_n' \bfSigma_{n}[i] \vecw_n 
\]
changes from $ \sigma_{n0}^2 = \vecw_n' \bfSigma_n^{(0)} \vecw_n $ to some different value $ \sigma_{n1}^2 \not= \sigma_{n0}^2 $ and then remains constant again. The applications discussed above in Subsections~\ref{Subsec: portfolio} and \ref{Subsec: sparsePCA} provide examples for the selection of the projection vector. However, it can also be chosen in order to analyze certain elements of the variance-covariance matrix. 

An appropriate change-point test statistic directly suggested by our results for the case of known $ \bfSigma_n^{(0)} $ is given by
\[
  V_n = \max_{k \le n} | \alpha_n^{-1} \calD_n(k/n) | = \max_{k \le n}
  n^{-1/2} \alpha_n^{-1}| \vecw_n'( \wh{\bfSigma}_{nk} - k \bfSigma_{n}^{(0)} ) \vecw_n |,
\]
cf. (\ref{SigmaDefHat}) and (\ref{SigmaDef}). Corollary~\ref{Corollary1} now provides us with the asymptotic null distribution,
\[
  \max_{k \le n} | \alpha_n^{-1} \calD_n(k/n) |  \sim_{n \to \infty} \sup_{t \in [0,1]} | B(t) |,
\] 
that is needed to determine critical values in order to devise such a test. Critical values $ c_{1-\alpha} $, $ \alpha \in (0,1) $, can be easily calculated using the well known explicit formula
for the d.f. of  $   \sup_{t \in [0,1]} | B(t) |  $,
see e.g. \cite{Shorack2000}. If $ \alpha_n $ is unknown, it can be estimated by $ \wh{\alpha}_{n_0} $ from a learning sample of size $ n_0 \ge n $, which satisfies the no-change null hypothesis and
the assumptions of Theorem~\ref{ThLRVEst}, using the  first $n$ time series. Assuming that such a learning sample is given is, however, standard in the change-point literature, see e.g. \cite{ChuStinchcombeWhite1996} where it has been named  {\em non-contamination assumption}. We now reject the no--change null hypothesis in favor that a change has occured, if $ V_n > c_{1-\alpha} $. In this case, the unknown  (first) change-point, i.e. the onset, is estimated canonically by
\[
  \wh{k}_n = \min \{ k \le n : | \calD_n( k/n ) | \ge | \calD_n( \ell/n ) |, \ \ell = 1, \dots, n \}.
\]
If $ \bfSigma_n^{(0)} $ is unknown, one may rely on $ \calD_n^0 $ defined in (\ref{DefCalD0}) and use the test statistic 
\[
  V_n^0 = \alpha_n^{-1} \max_{k \le n} | \calD_n^0( k/n ) |
  \stackrel{H_0}{\sim}_{n \to \infty} \sup_{t \in [0,1]} | B^0(t) |,
\] 
where $ B^0 $ is a Brownian bridge on $ [0,1] $. Again, $ \alpha_n $
can be replaced by our estimator if it is unknown, using a learning sample satisfying the assumptions of Theorem~\ref{ThLRVEst}.

\section*{Acknowledgements}

Parts of this paper have been written during a research stay of the first author as visiting professor at Universit\'e catholique de Louvain, Louvain-la-Neuve, a visit at the Department of Statistics of Seoul National University, Seoul, South Korea, and a sabbatical. He thanks Rainer von Sachs and Sangyeol Lee for their hospitality.
Rainer von Sachs gratefully acknowledges funding by contract “Projet d'Actions de Recherche Concert\' ees'' No. 12/17-045 of the ,,Communaut\'e fran\c caise de Belgique'' and by IAP research network Grant P7/06 of the Belgian government (Belgian Science Policy). He wants to thank the Institute of Stochastics of the RWTH Aachen (Prof. A. Steland and his group) for the warm hospitality during his sabbatical stay in autumn 2013, and part of this sabbatical has generously been funded by the Universit\'e catholique de Louvain.
We thank two anonymous referees for helpful comments which improved the presentation of the results.

\appendix

\section*{Proofs}


\begin{proof}[Proof of Lemma~\ref{TechnicalLemma}] 
The assertions follow from 
\ifthenelse{\boolean{extendedversion}}{(\ref{FProp1})--(\ref{FProp3}),}{}
\[
  \sup_{n \in \N} ( c_j^w )^2  = \sup_{n \in \N} \left( \sum_{\nu=1}^{d_n} w_\nu c_j^{(\nu)} \right)^2
    \le \sup_{n \in \N} \max_{1 \le j \le d_n} | c_{nj}^{\nu} |^2 \| \vecw_n \|_{\ell_1}^2
\]
and \cite[Remark 3.2]{Kouritzin1995}.
\end{proof}

\begin{proof}[Proof of Theorem~\ref{Th1}]
Notice that we have 
\begin{align*}
   D_{nk}( \vecv_n, \vecw_n )
& = \vecv_n' (\wh{\bfSigma}_{n,k} - \bfSigma_{n,k}) \vecw_n \\
& =
    \sum_{\nu, \mu =1}^{d_n} v_{\nu} w_{\mu} \sum_{i \le k} 
[ Y_i^{(\nu)} Y_i^{(\mu)} - E Y_i^{(\nu)} Y_i^{(\mu)} ] \\
  & =
  \sum_{i \le k} \left\{ 
\sum_{\nu, \mu=1}^{d_n} v_{\nu} w_{\mu} Y_i^{(\nu)} Y_i^{(\mu)} - 
\sum_{\nu, \mu=1}^{d_n} v_{\nu} w_{\mu} E Y_i^{(\nu)} Y_i^{(\mu)} 
\right\}
\end{align*}
leading to the representation
\begin{equation}
\label{ReprDnk}
  D_{nk}( \vecv_n, \vecw_n ) = \sum_{i\le k} [ Y_{ni}{(\vecv_n)}Y_{ni}{(\vecw_n)} - E Y_{ni}{(\vecv_n)} Y_{ni}{(\vecw_n)} ] 
\end{equation}
with linear processes
\begin{equation}
\label{DefLinProcInnerProd}
  Y_{ni}{(\vecv_n)} = \sum_{j=0}^\infty c_{nj}^{(v)} \epsilon_{i-j}, 
  \qquad
  Y_{ni}{(\vecw_n)} 
  = \sum_{j=0}^\infty c_{nj}^{(w)} \epsilon_{i-j},
\end{equation}
 w.r.t. $ \{ \epsilon_t \} $ given by the coefficients
\[
c_{nj}^{(v)} = \sum_{\nu=1}^{d_n} v_\nu c_{nj}^{(\nu)},
\qquad 
c_{nj}^{(w)} = \sum_{\nu=1}^{d_n} w_\nu c_{nj}^{(\nu)},
\]
for $ j \ge 0 $ and $ n \ge 1 $. We may now follow the method of proof of \cite{Kouritzin1995}, however, we have to take into
account that the above processes depend on $n$.

Let $ \calF_m = \sigma( \epsilon_i : i \le m ) $, $ m \ge 1 $. It is easy to check that, for any fixed $n \in \N $, the r.v.s.
\[
  M_m^{(n)}( \vecv_n, \vecw_n )
= \wt{f}_{0,0}^{(n)}( \vecv_n, \vecw_n ) \sum_{k=0}^m 
( \epsilon_k^2 - \sigma_k^2) + \sum_{k=0}^m \epsilon_k \sum_{l=1}^\infty \wt{f}_{l,0}^{(n)}( \vecv_n, \vecw_n ) \epsilon_{k-l},
\qquad m \ge 0,
\]
satisfy  $ E( M_m^{(n)}( \vecv_n, \vecw_n ) \, | \, \calF_{m-1} )
= M_{m-1}^{(n)}( \vecv_n, \vecw_n ) $, for $ m \ge 0 $, thus forming
a martingale array 
$
  \{ M_m^{(n)}( \vecv_n, \vecw_n ) : m \in \N, n \in \N \}
$
with associated martingale differences 
\begin{align*}
  &M_{n' + m'}^{(n)}( \vecv_n, \vecw_n ) - M_{m'}^{(n)}( \vecv_n, \vecw_n )  \\
 & \qquad =
\wt{f}_{0,0}^{(n)}( \vecv_n, \vecw_n ) \sum_{k=m'+1}^{n'+m'} 
( \epsilon_k^2 - \sigma_k^2) + \sum_{k=m'+1}^{n'+m'} \epsilon_k \sum_{l=1}^\infty \wt{f}_{l,0}^{(n)}( \vecv_n, \vecw_n ) \epsilon_{k-l},
\end{align*}
for $ n', m' \ge 0 $. Put
\begin{equation}
\label{DefDnm}
  D_{n',m'}^{(n)}( \vecv_n, \vecw_n ) = \sum_{k=m'+1}^{m'+n'} [Y_k{(\vecv_n)} Y_k{(\vecw_n)} - E Y_k{(\vecv_n)} Y_k{(\vecw_n)} ], \qquad m', n' \ge 0,
\end{equation}
and consider the decomposition
\[
  D_{n',m'}^{(n)}( \vecv_n, \vecw_n ) 
  = M_{n'+m'}^{(n)}( \vecv_n, \vecw_n ) - M_{m'}^{(n)}( \vecv_n, \vecw_n ) + R_{n',m'}^{(n)}( \vecv_n, \vecw_n ), \qquad m', n' \ge 0.
\]
In order to justify the approximation of  $ D_{n', m'}^{(n)}( \vecv_n, \vecw_n ) $ by the martingale differences defined above, it suffices to show that 
 $  \sup_n E [R_{n', m'}^{(n)}( \vecv_n, \vecw_n )]^2 $ tends to $0$ sufficiently fast, as $ n', m' \to \infty $. Using the representation 
\cite[(4.3)]{Kouritzin1995}, repeating the arguments in \cite{Kouritzin1995} leading to the bounds in (4.8), (4.9) and (4.10) therein and noting that those bounds are uniform in $ n \ge 1 $, we obtain
\[
  E( R_{n' m'}^{(n)}( \vecv_n, \vecw_n ) )^2 \stackrel{n',m'}{<<} (n')^{-1-\theta}.
\]
and, for each $ n \in \N $,
\[
  \| E[ (D_{m' n'}^{(n)}( \vecv_n, \vecw_n ))^2 \, | \, \calF_{m'} ] \|_1
<< (n')^{-1-\theta}.
\]
This approximation in $ L_2 $ with a rate allows for very general conditions for the validity of a strong approximation.
Again, we may follow the arguments given by \cite{Kouritzin1995}, by verifying the following sufficient conditions due to \cite{Philipp1986}. In terms of an array $ \xi_k^{(n)} $, $ k = 1, \dots, n $, of r.v.s., Philipp's result is as follows. Let $ \calG_m^{(n)} = \sigma( \xi_{ni} : i \le m ) $. If 
\begin{equation}
\label{DefS}
 S_{n',m'}^{(n)} = \sum_{k=m'+1}^{m'+n'} \xi_k^{(n)}, m', n' \ge 0,
\end{equation} 
satisfies
\begin{itemize}
  \item[(I)]   $ \| E( S_{n', m'}^{(n)} | \calG_{m'}^{(n)} ) \|_1 \stackrel{m',n'}{<<} (n')^{1/2-\varepsilon} $, a.s., for some $ \varepsilon > 0 $,
  \item[(II)]  there exists an $ \alpha_n^2 \ge 0 $ such that
$ \| E[ (S_{n',m'}^{(n)})^2 | \calG_{m'}^{(n)} ] - n' \alpha_n^2 \|_1 \stackrel{n',m'}{<<} (n')^{1-\varepsilon} $, a.s., for some $ \varepsilon > 0 $.
  \item[(III)] $ \sup_{k \ge 0} E | \xi_k^{(n)} |^{4+\delta} < \infty $ for some $ \delta > 0 $,
\end{itemize}
then there exists a process $ \{ \wt{S}_{n'}^{(n)}  : n' \ge 0 \} $ and a standard Brownian motion $ \{ \wt{B}_t^{(n)} : t \ge 0 \} $ on some probability space
$ ( \wt{\Omega}, \wt{\calF}, \wt{P} ) $, such that $ \{ \wt{S}_{n'}^{(n)}  : n' \ge 0 \} \stackrel{d}{=}  \{ S_{n',0}^{(n)}  : n' \ge 0 \} $ and for some $ \lambda > 0 $
\[
  | \wt{S}_{\lfloor t \rfloor}^{(n)} - \alpha_n \wt{B}_t^{(n)} | 
\stackrel{t}{<<} t^{1/2-\lambda},
\] 
for all $ t>0 $ $ \wt{P} $-a.s.  Putting, for fixed $ n \ge 1 $,  
\begin{equation}
\label{DefXi}
 \xi_k^{(n)} = \xi_k^{(n)}( \vecv_n, \vecw_n) = Y_k(\vecv_n) Y_k(\vecw_n) - E( Y_k(\vecv_n) Y_k(\vecw_n) ),
\end{equation}
$ \calG_m^{(n)} = \calF_m $ (since the $ \xi_k^{(n)} $ are $ \calF_k $--measurable) and repeating the arguments of \cite{Kouritzin1995},
we see that, by virtue of Assumption (A), (I)-(III) hold true, which establishes
the existence of a standard Brownian motion, $ B_n(t) $, $ t \in [0,\infty) $, such that for some constant $ C_n $ and some universal $ \lambda > 0 $
\[ 
  | D_{nt} - \alpha_n B_n(t) | \le C_n t^{1/2-\lambda},
\] 
for all $ t > 0 $, a.s. Denoting the standard Brownian motion on $ [0,1] $ associated to $ B_n $, 
$t \mapsto n^{-1/2} B_n( t n ) $, $ t \in [0,1] $, again by $B_n$, we obtain
\[
  \sup_{t \in [0,1]} | n^{-1/2} D_{n, \trunc{nt}} - \alpha_n B_n( \trunc{nt} / n ) | 
  \le C_n n^{-\lambda},
\]
which establishes (\ref{StrongApprox}) and (\ref{MainCLT}), provided $ C_n n^{-\lambda} = o(1) $.
It also follows that, for each fixed $n$, the conditional variance of $ M_{m'+n'}^{(n)} - M_{m'}^{(n)} $ satisfies
\begin{equation}
\label{CondVariance}
  \| E[ ( M_{m'+n'}^{(n)}( \vecv_n, \vecw_n ) - M_{m'}^{(n)}( \vecv_n, \vecw_n ) )^2 | \calF_{m'} ] - n' \alpha_n^2 \|_1 
\stackrel{n',m'}{<<} (n')^{1-\theta/2}
\end{equation}
and (cf. \cite[(4.22)]{Kouritzin1995})
\begin{equation}
\label{CondVar2}
  \| E[ ( D_{n',m'}^{(n)}(\vecv_n, \vecw_n ))^2 \, | \, \calF_{m'} ]
  - n' \alpha_n^2( \vecv_n, \vecw_n ) \|_1 
  \stackrel{n',m'}{<<} (n')^{1-\theta/2}
\end{equation}
as well as
\begin{equation}
\label{CondVar3}
  | E( D_{n',m'}^{(n)}( \vecv_n, \vecw_n  )^2 - n' \alpha_n^2( \vecv_n, \vecw_n ) |
  \stackrel{n',m'}{<<} (n')^{1-\theta/2}.
\end{equation}
The constants appearing in (\ref{CondVariance}), (\ref{CondVar2}) and (\ref{CondVar3}) depend on
the weighting vectors only through their $ \ell_1 $--norms, as a consequence of eqns (4.22)--(4.26) in \cite{Kouritzin1995}
and Lemma~\ref{TechnicalLemma}. 
\end{proof}

Recall that for a $K$--dimensional mean zero random vector $ \vecZ $ with 
variance-covariance matrix $ \matW $, say, the covariance operator
$ C(\vecu ) = E( \vecu'\vecZ \vecZ ) $, $ \vecu \in \R^K $, can be identified with the linear
mapping $ \R^K \mapsto L(\R^K; \R) $, $ \vecu \mapsto \vecu' \matW $, $ \vecu \in \R^K $, induced by the  variance-covariance matrix $ \matW $, where $ L(A,B) $ denotes the set of linear mappings $A \to B $.

\begin{proof}[Proof of Theorem~\ref{ThK}] 
Put
\[ 
  \vecD_{nk} = ( D_{nk}(j) )_{j=1}^K = ( \vecv_{nj}'( \wh{\bfSigma}_{n,k} - \bfSigma_{n,k} ) \vecw_{nj} )_{j=1}^K 
\] 
and notice that 
\[ \vecD_{nk} = \sum_{i\le k} \bfxi_{i}^{(n)}, \qquad \bfxi_i^{(n)} = ( Y_{ni}( \vecv_{nj} ) Y_{ni}( \vecw_{nj} ) - E  Y_{ni}( \vecv_{nj} ) Y_{ni}( \vecw_{nj} ) )_{j=1}^K.
\] 
Also put $ \vecS_{n',m'}^{(n)} = \sum_{k=m'+1}^{m'+n'} \bfxi_k^{(n)} $, $ m', n' \ge 0 $. For the Euclidean space $ \R^K $ equipped with the usual inner product and the induced vector $ \ell_2$--norm, the conditions (I) and (III) are easily checked. For instance, Jensen's inequality yields
\[
  E \| E( \vecS_{n',m'}^{(n)} | \calF_{m'} ) \|_{\ell_2} \le \sqrt{K} C ( n' )^{1-\theta} 
\]
for some constant $ C $ which does not depend on $ n', m', n $ and $ \{ ( \vecv_{nj}, \vecw_{nj} ) : j = 1, \dots, K \} $.
Introduce the conditional covariance operator 
\[ 
  C_{n',m'}^{(n)}( \vecu ) = E( \vecu ' \vecS_{n',m'}^{(n)}  \vecS_{n',m'}^{(n)} | \calF_{m'} ), \qquad \vecu \in \R^K.
\] 
and the covariance operator 
\[ 
  T^{(n)}( \vecu ) = E( \vecu' \vecB^{(n)} \vecB^{(n)} ), \qquad \vecu \in \R^K. 
\] 
Noting that $ T^{(n)}( \vecu ) = \sum_{j=1}^K u_j ( \Cov( B_{n1}, B_{nj} ), \dots, \Cov( B_{nK}, B_{nj} ) )' $, we have to check the remaining condition
\begin{itemize}
 \item[(II)]  $ E \| (n')^{-1} C_{n',m'}^{(n)} - C^{(n)} \| \stackrel{n',m'}{<<} (n')^{-\theta}$, for some covariance operator $C^{(n)} $.
\end{itemize}
Here the norm is the operator norm defined as $ \| L \| = \sup_{\vecu\in \R^K, \| \vecu \| = 1} | \vecu'L(\vecu) | $ for
a linear operator $ L : \R^K \to \R^K $. It follows that (II) holds true with $ C^{(n)} = T^{(n)} $, if for $ i = 1, \dots, K $
\[
  \| E[ (D_{n',m'}^{(n)}(i))^2 | \calF_{m'} ] - n' \alpha_n(\vecv_{ni}, \vecw_{ni} ) \|_{L_1} \stackrel{m'}{<<}
  (n')^{1-\theta/2}
\]
and for $ i, j = 1, \dots, K $ with $ i \not= j $
\begin{equation}
\label{L1ApproxBetas}
  \| E[ D_{n',m'}^{(n)}(i) D_{n',m'}^{(n)}(j) | \calF_{m'} ] - n'  \beta_n( \vecv_{ni}, \vecw_{ni}, \vecv_{nj}, \vecw_{nj} ) \|_{L_1} 
\stackrel{‚m'}{<<}
  (n')^{1-\theta/2}.
\end{equation}
The last fact follows by a  lengthy but straightforward calculation using (\ref{Ass5}).
\end{proof}

\begin{proof}[Proof of Theorem~\ref{Th2}]
The proof is similar to the proof of Theorem~\ref{Th1} and \cite{Kouritzin1995} having observed the following crucial facts. We have
\[
  D_k(  \{ \vecv_n, \vecw_n \} ) =
  \sum_{i \le k} [Y_i( \{ \vecv_n \} ) Y_i( \{ \vecw_n \} ) - E( Y_i( \{ \vecv_n \} ) Y_i( \{ \vecw_n \} ) ) ],
\]
for the linear processes $ Y_i( \{ \vecv_n \} ) = \sum_{j=0}^\infty c_j^{(v)} \epsilon_{i-j} $, $ i \ge 1 $, and
$  Y_i( \{ \vecv_n \} ) = \sum_{j=0}^\infty c_j^{(w)} \epsilon_{i-j} $ with coefficients
$ c_j^{(v)} = \sum_{n=1}^\infty \lambda_n \sum_{\nu=1}^{d_n} v_{\nu} c_{nj}^{(\nu)} $ 
and
$ c_j^{(w)} = \sum_{n=1}^\infty \lambda_n \sum_{\nu=1}^{d_n} w_{\nu} c_{nj}^{(\nu)} $ for $ j \ge 0 $,
which do not depend on the sample size. Since $ \sum_n | \lambda_n | < \infty $,
\[
  ( c_j^{(v)} )^2 << \left( \sum_{n=1}^\infty |\lambda_n| \right)^2 \sup_{n \ge 1} 
  \left( \sum_{\nu=1}^{d_n} v_\nu c_{nj}^{(\nu)} \right)^2 <<(j \vee 1)^{-3/2-\theta/2},
\]
i.e. Assumption (A) is satisfied. Hence the result 
\end{proof}

\ifthenelse{\boolean{extendedversion}}{
\begin{proof}[Proof of Theorem~\ref{ThLRVEst}] 
First observe that $ Y_k^{(v)} Y_k^{(w)} $, $ k \ge 1 $, as well as $ Y_k^{(v_r)} Y_k^{(w_r)} Y_{k+h}^{(v_s)} Y_{k+h}^{(w_s)} $, $ k \ge 1 $, are strictly stationary for any fixed $ r, s $ and $ h $. Their dependence on $d$ will be suppressed in notation. It follows from the proof of Theorem~\ref{Th1} and \cite[p.~351]{Kouritzin1995} that $ \alpha^2 $ can be represented as
\[
  \alpha^2 = \lim_{N \to \infty} \Var \left( \frac{1}{\sqrt{N}}
  \sum_{k=0}^N [Y_k^{(v)} Y_k^{(w)} - E( Y_1^{(v)} Y_1^{(w)} ) ] \right), 
\] 
and therefore $ \alpha^2 $ is the long-run variance parameter associated to the time series 
\[
  \xi_k =  Y_k^{(v)} Y_k^{(w)} - E( Y_1^{(v)} Y_1^{(w)} ),  \qquad k \ge 1,
\] 
and $ \wh{\alpha}^2_n $ is the Bartlett type estimator calculated from the first $ n $ observations. Analogously, by virtue of (\ref{L1ApproxBetas}), 
\begin{align*}
  \beta^2(r,s) &= \lim_{N \to \infty} 
  E 
    \left(
      \frac{1}{\sqrt{N}} \sum_{k=1}^N 
      \xi_k( r ) \right)
    \left(
      \frac{1}{\sqrt{N}} \sum_{k=1}^N 
      \xi_k(s) 
    \right) \\
    &= E[ \xi_1(r) \xi_1(s) ] + 2 \lim_{N \to \infty} 
    \sum_{h=1}^N \frac{N-h}{N} E[ \xi_1(r) \xi_{1+h}(s) ],
\end{align*}
where
\[
  \xi_k(\ell) = Y_k(\vecv_\ell) Y_k(\vecw_\ell) - E[Y_1(\vecv_\ell) Y_1(\vecw_\ell)  ], \qquad k \ge 1,
\]
for $ \ell = 1, 2, \dots $. 
Again, we omit the dependence on $d$, but will indicate it for relevant derived quantities when appropriate. 
Put
\begin{align*}
  \Gamma_h^{(2)}(r,s) &= \Gamma_h^{(2)}(r,s;d) =E( Y_1^{(v_r)} Y_1^{(w_r)} Y_{1+|h|}^{(v_s)} Y_{1+|h|}^{(w_s)} ), \\
  \Gamma_h &= \Gamma_h(r,s) = \Gamma_h(r,s;d) = E( \xi_1(r) \xi_{1+h}(s) ),
\end{align*}
for $ h \in \Z $. Observe that 
$ E ( Y_1^{(v_r)} Y_1^{(w_r)} ) E( Y_{1+h}^{(v_s)} Y_{1+h}^{(w_s)} )
= \sigma^4 \prod_{z \in \{r,s\}} \left( \sum_{j=0}^\infty c_j^{(v_z)} c_j^{(w_z)} \right) $.
Further, $ \Gamma_h^{(2)}(r,s) $ is the sum over all terms
$ c_j^{(v_r)} c_k^{(w_r)} c_l^{(v_s)} c_m^{(w_s)} 
  E( \epsilon_{i-j} \epsilon_{i-k} \epsilon_{i+h-l} \epsilon_{i+h-m} ) $
  such that either all four indices $ a=i-j, b=i-k, c= i+h-l, d=i+h-m $ are equal,
  denoted by $ \Gamma_{h,1}^{(2)}(r,s) $, plus the sum of all terms
  such that $ \{ a, b, c, d \} = \{ A, B \} $ with $A \not= B$, denoted
  by $ \Gamma_{h,2}^{(2)}(r,s) $.
  Using
  \[
  | c_j^{(v_r)} | \le \| \vecv_r \|_{\ell_1} \sup_{1 \le \nu} | c_j^{(\nu)} |
  = O( \| \vecv_r \|_{\ell_1} (j \vee 1)^{-(1+\delta)} )
  \]
  and
  \[
  | c_{j+h}^{(v_r)} | \le \| \vecv \|_{\ell_1} \sup_{1 \le \nu} \sup_{0 \ge j} | c_{j+h}^{(\nu)} | = O( \| \vecv_r \|_{\ell_1}  h^{-(1+\delta)} ),
  \]
  for $ j \ge 0 $ and $ h \ge 1$,  the former case leads to 
  $ 
    \Gamma_{h,1}^{(2)}(r,s) 
    =  E( \epsilon_1^4 ) \sum_{j=0}^\infty c_j^{(v_r)} c_j^{(w_r)} c_{j+h}^{(v_s)} c_{j+h}^{(w_s)} 
  =    
  O( h^{-2(1+\delta)} ) 
  $, uniformly in $d \in \N $ and $ 1 \le r,s$. To discuss $ \Gamma_{h,2}^{(2)}(r,s;d) $ consider the subcase
  $ j = k, l = m, l \not= h+j $ corresponding to the sum
  \[
    \sigma^4 \sum_{j=0}^\infty c_j^{(v_r)} c_j^{(w_r)}
    \sum_{l=0,l\not= j+h} c_l^{(v_s)} c_l^{(w_s)}
    = E( Y_1^{(v_r)} Y_1^{(w_r)} ) E( Y_1^{(v_s)} Y_1^{(w_s)} )
    - \overline{\Gamma}_h^{(2)}(r,s;d),
  \]
  where 
  $ \sup_{1 \le r,s} \sup_{d \in \N} |\overline{\Gamma}_h^{(2)}(r,s;d)| << \sum_{j=0}^\infty (j\vee 1)^{-2(1+\delta)} h^{-2(1+\delta)} $, leading to \[ 
   \sup_{1 \le r,s} \sup_{d \in \N} \sum_{h=1}^\infty | \overline{\Gamma}_h^{(2)}(r,s;d) | < \infty. \] 
   The other subcases are straightforward, such that we arrive at
\begin{align*}
  \sup_{1 \le r,s} \sup_{d \in \N}
    \sum_{h=1}^\infty |\Gamma_h(r,s;d)|
    & =  \sup_{1 \le r,s} \sup_{d \in \N} \sum_{h=1}^\infty
    | \Gamma_h^{(2)}(r,s;d) - E( Y_1^{(v_r)}  E( Y_1^{(w_r)} ) E(Y_{1+h}^{(v_s)} Y_{1+h}^{(w_s)} ) | < \infty.
\end{align*}
Hence
\begin{equation}
\label{UniformSeriesRepr}
  \sup_{1 \le r, s } \sup_{d \in \N} |\beta^2(d;r,s)| \le \sup_{1 \le r, s } \sup_{d \in \N} \sum_{h \in \Z} | \Gamma_h(d;r,s) | < \infty.
\end{equation} 
Next introduce the coupling dependence measure
\[
  \delta_p(\{ Z_i : i \in \N_0 \}, n) = \| Z_n- Z_n' \|_{L_p} 
\]
$ p \ge 1 $, for a time series $ Z_n = Z(\epsilon_n, \epsilon_{n-1}, \dots ) $,
where $ Z_n' = Z( \epsilon_n, \dots, \epsilon_1, \epsilon_0', \epsilon_{-1}, \dots ) $ with $ \epsilon_0 \stackrel{d}{=} \epsilon_0' $ such that $ \epsilon_0' $ is independent from $ \{ \epsilon_k \} $. Since $ Y_i^{(v)} $ is a causal linear process with coefficients
$ c_j^{(v)} = \sum_{\nu=1}^d v_\nu c_j^{(\nu)} $, we have
\[
  \| Y_k^{(v)} \|_{L_8} \le 
    \| \epsilon_0 \|_{L_8} \| \vecv \|_{\ell_1} \sum_{j=0}^\infty \sup_{1 \le \nu} | c_j^{(\nu)} | \le c 
\]
and by (\ref{DecayCoeff})
\[
  \delta_8( \{ Y_i^{(v)} \}, k ) = \| Y_k - Y_k' \|_{L_8}
  \le E|\epsilon_1|^8 \| \vecv \|_{\ell_1} \sup_{1 \le \nu} | c_k^{(\nu)} |
  = O( (k \vee 1)^{-(1+\delta)} )
\]
such that $ \sum_{k=0}^\infty \delta_8( \{ Y_i^{(v)} \}, k ) \le C < \infty $ for constants $ c, C < \infty $ not depending on $d$ and uniformly
over $ \| \vecv \|_{\ell_1} \le C_{v,w} $. Further, (using
$ [ E( X^4 Y^4 ) ]^{1/4} \le [ ( E X^8 )^{1/2} ( E Y^8 )^{1/2} ]^{1/4} = [ E X^8 ]^{1/8} [ E Y^8 ]^{1/8} $),
\begin{align*}
  \delta_4( \{ \xi_i \}, k ) 
  & = \| Y_k^{(v)} Y_k^{(w)} - (Y_k^{(v)} Y_k^{(w)})' \|_{L_4} \\
  & \le \| Y_k^{(v)} \|_{L_8} \| Y_{k}^{(w)} - Y_{k}^{(w)}{}' \|_{L_8} 
    + \| Y_{k}^{(w)} \|_{L_8} \| Y_k^{(v)} - Y_k^{(v)}{}' \|_{L_8} \\
  & = O( \delta_8( \{ Y_i^{(v)} \}, k ) + \delta_8( \{ Y_i^{(w)} \}, k ) )
\end{align*}
leading to $ \sum_{k=0}^\infty \delta_4(\{ \xi_i \}, k ) \le C_1 < \infty $ for some constant $C_1$, uniformly over $ d \in \N $
and $ \| \vecv \|_{\ell_1}, \| \vecw \|_{\ell_1} \le C_{v,w} $. Lastly,
\begin{align*}
  \delta_2( \{ \xi_i(r) \xi_{i+h}(s) \}, k ) 
  & = \| Y_k^{(v_r)} Y_k^{(w_r)} Y_{k+h}^{(v_s)} Y_{k+h}^{(w_s)} - (Y_k^{(v_r)} Y_k^{(w_r)} Y_{k+h}^{(v_s)} Y_{k+h}^{(w_s)})' \|_{L_2} \\
  & \le 
  \|  Y_k^{(v_r)} Y_k^{(w_r)} \|_{L_4} 
  \| Y_{k+h}^{(v_s)} Y_{k+h}^{(w_s)}  - ( Y_{k+h}^{(v_s)} Y_{k+h}^{(w_s)} )' \|_{L_4} \\
  & \qquad
  + \| Y_{k+h}^{(v_s)} Y_{k+h}^{(w_s)} \|_{L_4}
     \| Y_k^{(v_r)} Y_k^{(w_r)}  - (Y_k^{(v_r)} Y_k^{(w_r)} )' \|_{L_4} \\
  &= O( \delta_4( \{ \xi_i \}, k) )  \\
  & = O( \delta_8( \{ Y_i^{(v)} \}; k ) + \delta_8( \{ Y_i^{(w)} \}; k ) ),
\end{align*}
leading to $ \sum_{k=0}^\infty \delta_2( \{ \xi_i(r) \xi_{i+h}(s) \}, k ) < C_2 $ for a constant $C_2$, uniformly over $d \in \N$ and $ 1 \le r, s $. Define 
\[
  \wt{\Gamma}_h(r,s) = \frac{1}{n} \sum_{i=1}^{n-h} \xi_i(r) \xi_{i+h}(s).
\] 
By virtue of \cite[Th.~1]{Wu2007}, we obtain
\begin{equation}
\label{L2ConvWu}
  \sup_{d \in \N} E (n [ \wt{\Gamma}_h(r,s;d) - E( \wt{\Gamma}_h(r,s;d) ) ] )^2  \le C_3 (n-h)
\end{equation}
for some constant $C_3< \infty $ not depending on $ h $ or $ m $,
uniformly over $ \| \vecv \|_{\ell_1} \le C_{v,w} $, such that
\[
  \sup_{1 \le r, s} \sup_{d \in \N} \max_{|h| \le m_n} \| \wt{\Gamma}_h(r,s; d) - E( \wt{\Gamma}_h(r,s; d) ) \|_{L_2}
  \le C_4 n^{-1/2},
\]
for some constant $ C_4 < \infty $. Observe that
$ \wh{\Gamma}_h(r,s) = \frac{1}{n} \sum_{i=1}^{n-h} ( \xi_i(r) - \overline{\xi}_n(r) )( \xi_{i+h}(s) - \overline{\xi}_n(s) ) $, where
$ \overline{\xi}_n(\ell) = n^{-1} \sum_{j=1}^n \xi_j(\ell) $, $ \ell = 1, 2, \dots $, and
\[
  n[ \wh{\Gamma}_h(r,s) - \wt{\Gamma}_h(r,s) ] =
  - \overline{\xi}_n(r) \sum_{j=1}^{n-h} \xi_{j+h}(s) 
  - \overline{\xi}_n(s) \sum_{j=1}^{n-h} \xi_j(r) 
  + \overline{\xi}_n(r) \sum_{j=1}^n \xi_j(s).
\]
Using the Cauchy-Schwarz inequality, we thus obtain
\begin{align*}
  n E |\wh{\Gamma}_h(r,s) - \wt{\Gamma}_h(r,s) |
  &\le 
  \| \overline{\xi}_n(r) \|_{L_2} \left\| \sum_{j=1}^{n-h} \xi_{j+h}(s) \right\|_{L_2}
  + \| \overline{\xi}_n(s) \|_{L_2} \left\| \sum_{j=1}^{n-h} \xi_{j}(r) \right\|_{L_2}  \\
  & \qquad
  + \| \overline{\xi}_n(r) \|_{L_2} \left\|  \sum_{i=1}^n \xi_i(s) \right\|_{L_2}
= O(1)
\end{align*}
uniformly over $ h \le m $, $ d \in \N $ and $ 1 \le r, s $. It follows that
\[
  \sup_{1 \le r, s} \sup_{d \in \N} m \max_{|h| \le m}  E | \wh{\Gamma}_h(r,s;d) - \wt{\Gamma}_h(r,s;d) | =  O( m/n ) = o(1),
\]
which implies
\begin{align*}
  & \sup_{1 \le r, s}  \sup_{d \in \N} E \left| 
    \sum_{|h| \le m} w_{mh} \wh{\Gamma}_h(r,s;d) - 
    \sum_{|h| \le m} w_{mh} \wt{\Gamma}_h(r,s;d) 
    \right| \\
    & \qquad \le \sup_{1 \le r, s} \sup_{d \in \N} \max_{|h| \le m} E| \wh{\Gamma}_h(r,s;d) - \wt{\Gamma}_h(r,s;d) |
      \sum_{|h| \le m} w_{mh} \\
    & \qquad \le 2 W \sup_{1 \le r, s}  \sup_{d \in \N} m \max_{|h| \le m} E| \wh{\Gamma}_h(r,s;d) - \wt{\Gamma}_h(r,s;d) | = o(1),
\end{align*}
as $ n \to \infty $. Hence it suffices to show the result for
\[
  \wt{\beta}^2_n(r,s;d) = \sum_{|h| \le m} w_{mh} \wt{\Gamma}_h(r,s).
\]
Using the representation $ \beta^2(r,s;d) = \sum_{h \in \Z} \Gamma_h(r,s;d) $, we obtain the decomposition
\[
  \wt{\beta}_n^2(r,s;d) - \beta^2(r,s;d)
  = A_n(r,s;d) + B_n(r,s;d) + C_n(r,s;d) + D_n(r,s;d),
\]
where
\begin{align*}
  A_n(r,s;d) & = \sum_{|h| \le m} w_{mh} [\wt{\Gamma}_h(r,s;d) - E( \wt{\Gamma}_h(r,s;d) ) ], \\
  B_n(r,s;d) &= \sum_{|h| \le m} w_{mh} [ E( \wt{\Gamma}_h(r,s;d) ) - \Gamma_h(r,s;d) ], \\
  C_n(r,s;d) & = \sum_{|h| \le m} [ w_{mh} - 1] \Gamma_h(r,s;d),  \qquad
  D_n(r,s;d) = - \sum_{|h| > m} \Gamma_h(r,s;d).
\end{align*}
First observe that $ \sup_{1 /le r,s} \sup_{d \in \N} | D_n(r,s;d) |
= o(1) $ by (\ref{UniformSeriesRepr}), and, by Fubini and (\ref{L2ConvWu}), 
\begin{align*}
  \sup_{1 \le r, s} \sup_{d \in \N} E | A_n(r,s;d) | 
  &\le 
  \sup_{1 \le r, s} \sup_{d \in \N}  \int_{\Z} w_{mh} E | \wt{\Gamma}_h(r,s;d) - E \wt{\Gamma}_h(r,s;d) | \eins( |h| \le m) \, d \pi(h)  \\
  & \le 
  2 W m \sup_{1 \le r, s} \sup_{d \in \N}  \max_{|h| \le m} E | \wt{\Gamma}_h(r,s;d) - E \wt{\Gamma}_h(r,s;d) | \\
  & = O( m/n^{1/2} ) = o(1),
\end{align*}
as $ n \to \infty $, since $ m^2/n = o(1) $ by assumption. Here $ d \pi $ denotes the counting measure on $ \Z $. Further, 
\[
  E |B_n(r,s;d)| \le \int w_{mh} \left| \frac{n-h}{n} - 1 \right| | \Gamma_h(r,s;d) | \eins( |h| \le m ) \, d \pi(h),
\]
where the integrand is $ o(1) $ point-wise in $h$ and bounded by the (uniformly over  $ d \in \N $ and $ 1 \le r, s $) $ \pi $--integrable function $ 2 W | \Gamma_h(r,s;d) | $, such that $ \sup_{1 \le r, s} \sup_{d \in \N} E|B_n(r,s;d)| = o(1) $, as $ n \to \infty $, follows. Similiarly, $ \sup_{1 \le r, s} E |C_n(r,s;d)| = o(1) $, as $ n \to \infty $, uniformly in $d \in \N $, by (\ref{UniformSeriesRepr}) and (W1). Hence the assertion follows.
\end{proof}
}
{
\begin{proof}[Proof of Theorem~\ref{ThLRVEst}] 
First observe that $ Y_k^{(v)} Y_k^{(w)} $, $ k \ge 1 $, as well as $ Y_k^{(v_r)} Y_k^{(w_r)} Y_{k+h}^{(v_s)} Y_{k+h}^{(w_s)} $, $ k \ge 1 $, are strictly stationary for any fixed $ r, s $ and $ h $. Their dependence on $d$ will suppressed in notation. It follows from the proof of Theorem~\ref{Th1} and \cite[p.~351]{Kouritzin1995} that $ \alpha^2 $ can be represented as
\[
  \alpha^2 = \lim_{N \to \infty} \Var \left( \frac{1}{\sqrt{N}}
  \sum_{k=0}^N [Y_k^{(v)} Y_k^{(w)} - E( Y_1^{(v)} Y_1^{(w)} ) ] \right), 
\] 
and therefore $ \alpha^2 $ is the long-run variance parameter associated to the time series 
$
  \xi_k =  Y_k^{(v)} Y_k^{(w)} - E( Y_1^{(v)} Y_1^{(w)} ),  k \ge 1,
$ 
and $ \wh{\alpha}^2_n $ is the Bartlett type estimator calculated from the first $ n $ observations. Analogously, by virtue of (\ref{L1ApproxBetas}), 
\begin{align*}
  \beta^2(r,s) 
    &= E[ \xi_1(r) \xi_1(s) ] + 2 \lim_{N \to \infty} 
    \sum_{h=1}^N \frac{N-h}{N} E[ \xi_1(r) \xi_{1+h}(s) ],
\end{align*}
where
$
  \xi_k(\ell) = Y_k(\vecv_\ell) Y_k(\vecw_\ell) - E[Y_1(\vecv_\ell) Y_1(\vecw_\ell)  ] 
$, $ k \ge 1 $, for $ \ell = 1, 2, \dots $. 
  Put $ \Gamma_h = \Gamma_h(r,s) = \Gamma_h(r,s;d) = E( \xi_1(r) \xi_{1+|h|}(s) ) $, $ h \in \Z $.
  Using
  $ 
  | c_j^{(v_r)} | \le \| \vecv_r \|_{\ell_1} \sup_{1 \le \nu} | c_j^{(\nu)} |
  = O( \| \vecv_r \|_{\ell_1} (j \vee 1)^{-(1+\delta)} )
  $
  and
  $
  | c_{j+h}^{(v_r)} | \le \| \vecv \|_{\ell_1} \sup_{1 \le \nu}  \sup_{0 \le h}| c_{j+h}^{(\nu)} | = O( \| \vecv_r \|_{\ell_1}  h^{-(1+\delta)} ),
  $, for $ h \ge 1 $, a lengthy but straightforward calculation shows that 
\begin{equation}
\label{UniformSeriesRepr}
  \sup_{1 \le r, s } \sup_{d \in \N} |\beta^2(r,s;d)| \le \sup_{1 \le r, s } \sup_{d \in \N} \sum_{h \in \Z} | \Gamma_h(r,s;d) |  < \infty.
\end{equation} 
Next introduce the coupling dependence measure
\[
  \delta_p(\{ Z_i : i \in \N_0 \}, n) = \| Z_n- Z_n' \|_{L_p} 
\]
$ p \ge 1 $, for a time series $ Z_n = Z(\epsilon_n, \epsilon_{n-1}, \dots ) $,
where $ Z_n' = Z( \epsilon_n, \dots, \epsilon_1, \epsilon_0', \epsilon_{-1}, \dots ) $ with $ \epsilon_0 \stackrel{d}{=} \epsilon_0' $ such that $ \epsilon_0' $ is independent from $ \{ \epsilon_k \} $. Since $ Y_i^{(v)} $ is a causal linear process with coefficients
$ c_j^{(v)} = \sum_{\nu=1}^d v_\nu c_j^{(\nu)} $, we have
$
  \| Y_k^{(v)} \|_{L_8} \le 
    \| \epsilon_0 \|_{L_8} \| \vecv \|_{\ell_1} \sum_{j=0}^\infty \sup_{1 \le \nu} | c_j^{(\nu)} | \le c 
$
and by (\ref{DecayCoeff})
\[
  \delta_8( \{ Y_i^{(v)} \}, k ) = \| Y_k - Y_k' \|_{L_8}
  \le E|\epsilon_1|^8 \| \vecv \|_{\ell_1} \sup_{1 \le \nu} | c_k^{(\nu)} |
  = O( (k\vee 1)^{-(1+\delta)} )
\]
such that $ \sum_{k=0}^\infty \delta_8( \{ Y_i^{(v)} \}, k ) \le C < \infty $, for constants $ c, C < \infty $ not depending on $d$ and uniformly
over $ \| \vecv \|_{\ell_1} \le C_{v,w} $. Further
\begin{align*}
  \delta_4( \{ \xi_i \}, k ) 
  & = \| Y_k^{(v)} Y_k^{(w)} - (Y_k^{(v)} Y_k^{(w)})' \|_{L_4} \\
  & \le \| Y_k^{(v)} \|_{L_8} \| Y_{k}^{(w)} - Y_{k}^{(w)}{}' \|_{L_8} 
    + \| Y_{k}^{(w)} \|_{L_8} \| Y_k^{(v)} - Y_k^{(v)}{}' \|_{L_8} \\
  & = O( \delta_8( \{ Y_i^{(v)} \}, k ) + \delta_8( \{ Y_i^{(w)} \}, k ) )
\end{align*}
leading to $ \sum_{k=0}^\infty \delta_4(\{ \xi_i \}, k ) \le C_1 < \infty $ for some constant $C_1$, uniformly over $ d \in \N $
and $ \| \vecv \|_{\ell_1}, \| \vecw \|_{\ell_1} \le C_{v,w} $. Lastly,
analogously we obtain
\begin{align*}
  \delta_2( \{ \xi_i(r) \xi_{i+h}(s) \}, k ) 
  & = O( \delta_8( \{ Y_i^{(v)} \}; k ) + \delta_8( \{ Y_i^{(w)} \}; k ) ) 
\end{align*}
leading to $ \sum_{k=0}^\infty \delta_2( \{ \xi_i(r) \xi_{i+h}(s) \}, k ) < C_2 $ for a constant $C_2$, uniformly over $d \in \N$ and $ 1 \le r, s $. Define 
\[
  \wt{\Gamma}_h(r,s) = \frac{1}{n} \sum_{i=1}^{n-h} \xi_i(r) \xi_{i+h}(s).
\] 
Due to the above estimates of the dependence coupling measures
we can apply \cite[Th.~1]{Wu2007} and obtain
\begin{equation}
\label{L2ConvWu}
  \sup_{d \in \N} E (n [ \wt{\Gamma}_h(r,s;d) - E( \wt{\Gamma}_h(r,s;d) )] )^2  \le C_3 (n-h)
\end{equation}
for some constant $C_3< \infty $ not depending on $ h $ or $ m $,
uniformly over $ \| \vecv \|_{\ell_1} \le C_{v,w} $, such that
\[
  \sup_{1 \le r, s} \sup_{d \in \N} \max_{|h| \le m_n} \| \wt{\Gamma}_h(r,s; d) - E( \wt{\Gamma}_h(r,s; d) ) \|_{L_2}
  \le C_4 n^{-1/2},
\]
for some constant $ C_4 < \infty $. Observe that
$ \wh{\Gamma}_h(r,s) = \frac{1}{n} \sum_{i=1}^{n-h} ( \xi_i(r) - \overline{\xi}_n(r) )( \xi_{i+h}(s) - \overline{\xi}_n(s) ) $, where
$ \overline{\xi}_n(\ell) = n^{-1} \sum_{j=1}^n \xi_j(\ell) $, $ \ell = 1, 2, \dots $, and
\[
  n[ \wh{\Gamma}_h(r,s) - \wt{\Gamma}_h(r,s) ] =
  - \overline{\xi}_n(r) \sum_{j=1}^{n-h} \xi_{j+h}(s) 
  - \overline{\xi}_n(s) \sum_{j=1}^{n-h} \xi_j(r) 
  + \overline{\xi}_n(r) \sum_{j=1}^n \xi_j(s).
\]
Using the Cauchy-Schwarz inequality, we thus obtain
$
  n E |\wh{\Gamma}_h(r,s) - \wt{\Gamma}_h(r,s) |
= O(1)
$
uniformly over $ h \le m $, $ d \in \N $ and $ 1 \le r, s $. It follows that
\[
  \sup_{1 \le r, s} \sup_{d \in \N} m \max_{|h| \le m}  E | \wh{\Gamma}_h(r,s;d) - \wt{\Gamma}_h(r,s;d) | =  O( m/n ) = o(1),
\]
which implies, by boundedness of the weights,
\begin{align*}
  & \sup_{1 \le r, s}  \sup_{d \in \N} E \left| 
    \sum_{|h| \le m} w_{mh} \wh{\Gamma}_h(r,s;d) - 
    \sum_{|h| \le m} w_{mh} \wt{\Gamma}_h(r,s;d) 
    \right| = o(1),
\end{align*}
as $ n \to \infty $. Hence it suffices to show the result for
$
  \wt{\beta}^2_n(r,s;d) = \sum_{|h| \le m} w_{mh} \wt{\Gamma}_h(r,s).
$
Using the representation $ \beta^2(r,s;d) = \sum_{h \in \Z} \Gamma_h(r,s;d) $, we obtain the decomposition
\[
  \wt{\beta}_n^2(r,s;d) - \beta^2(r,s;d)
  = A_n(r,s;d) + B_n(r,s;d) + C_n(r,s;d) + D_n(r,s;d),
\]
where
\begin{align*}
  A_n(r,s;d) & = \sum_{|h| \le m} w_{mh} [\wt{\Gamma}_h(r,s;d) - E( \wt{\Gamma}_h(r,s;d) ) ], \\
  B_n(r,s;d) &= \sum_{|h| \le m} w_{mh} [ E( \wt{\Gamma}_h(r,s;d) ) - \Gamma_h(r,s;d) ], \\
  C_n(r,s;d) & = \sum_{|h| \le m} [ w_{mh} - 1] \Gamma_h(r,s;d),  \qquad
  D_n(r,s;d) = - \sum_{|h| > m} \Gamma_h(r,s;d).
\end{align*}
First observe that $ \sup_{1 \le r,s} \sup_{d \in \N} |D_n(r,s;d)| = o(1) $  by (\ref{UniformSeriesRepr}). Fubini and (\ref{L2ConvWu}) and yield
\begin{align*}
  \sup_{1 \le r, s} \sup_{d \in \N} E | A_n(r,s;d) | 
  &\le 
  \sup_{1 \le r, s} \sup_{d \in \N}  \int_{\Z} w_{mh} E | \wt{\Gamma}_h(r,s;d) - E \wt{\Gamma}_h(r,s;d) | \eins( |h| \le m) \, d \pi(h)  \\
  & \le 
  2 W m \sup_{1 \le r, s} \sup_{d \in \N}  \max_{|h| \le m} E | \wt{\Gamma}_h(r,s;d) - E \wt{\Gamma}_h(r,s;d) | = o(1) 
\end{align*}
as $ n \to \infty $, since $ m^2/n = o(1) $ by assumption. Here $ d \pi $ denotes the counting measure on $ \Z $. Further, 
\[
  E |B_n(r,s;d)| \le \int w_{mh} | (n-h)/n - 1 | | \Gamma_h(r,s;d) | \eins( |h| \le m ) \, d \pi(h),
\]
where the integrand is $ o(1) $ point-wise in $h$ and bounded by the (uniformly over  $ d \in \N $ and $ 1 \le r, s $) $ \pi $--integrable function $ 2 W | \Gamma_h(r,s;d) | $, such that $ \sup_{1 \le r, s} \sup_{d \in \N} E|B_n(r,s;d)| = o(1) $, as $ n \to \infty $, follows. Similiarly, $ \sup_{1 \le r, s} E |C_n(r,s;d)| = o(1) $, as $ n \to \infty $, uniformly in $d \in \N $, by (\ref{UniformSeriesRepr}) and (W1). Hence the assertion follows.
\end{proof}
}



\end{document}